\newcommand{\bea}{\begin{eqnarray}}
\newcommand{\eea}{\end{eqnarray}}
\newcommand{\bna}{\begin{eqnarray*}}
\newcommand{\ena}{\end{eqnarray*}}
\numberwithin{equation}{section} % ??¡ì¡§¡ã?1?¡§o??????¡§2?¨¤¡§¡èo?
\renewcommand{\thefootnote}{\fnsymbol{footnote}}
\theoremstyle{plain}
\newtheorem{theorem}{Theorem}[section]
\newtheorem{lemma}{Lemma}[section]
\newtheorem{corollary}{Corollary}
\newtheorem{proposition}{Proposition}[section]
\theoremstyle{definition}
\newtheorem{remark}{Remark}
\newcommand\blfootnote[1]{%
  \begingroup
  \renewcommand\thefootnote{}\footnote{#1}%
  \addtocounter{footnote}{-1}%
  \endgroup
}
\begin{document}

\title{On $\rm GL_3$ Fourier coefficients over values of mixed powers}

\author{Qingfeng Sun and Yanxue Yu }

\begin{abstract}
Let $A_{\pi}(n,1)$ be the $(n,1)$-th Fourier coefficient of the Hecke-Maass cusp form $\pi$ for $\rm SL_3(\mathbb{Z})$ and $ \omega(x)$ be a smooth compactly supported function.
In this paper, we prove a nontrivial upper bound for the sum
$$
\sum_{n_1,\cdots,n_\ell,n_{\ell+1}\in \mathbb{Z}_+
\atop n=n_1^r+\cdots+n_{\ell}^r+n_{\ell+1}^s}
A_{\pi}(n,1)\omega\left(n/X\right),
$$
where $r\geq2$, $s\geq 2$ and $\ell\geq 2^{r-1}$ are integers.
\end{abstract}

\thanks{This work was partially supported by the National Natural Science Foundation
 of China (Grant No. 12471005) and the Natural Science Foundation of Shandong Province (Grant No. ZR2023MA003)}

\keywords{$GL_3$ Fourier coefficients, mixed powers}
\maketitle

\blfootnote{{\it 2010 Mathematics Subject Classification}: 11P05, 11F30 }

\section{Introduction}

Modular forms were initially discovered and studied for purposes of complex analysis and algebraic geometry,
but they have since played significant roles in the development of many branches of number theory,
such as class field theory, Galois representations, arithmetic of elliptic curves, and so on.
They are now both fundamental tools and interesting research objects in number theory.
The key information of modular forms is encoded in their Fourier coefficients,
which are mysterious arithmetic objects, and it is therefore natural to study the distribution of Fourier coefficients.
Let $F(z)$ be a modular form for $\mathrm{GL}_m$
and let $A_F(n,1,\cdots,1)$ denote its $(n,1,\cdots,1)$-th normalized Fourier coefficient.
The generalized Ramanujan-Petersson conjecture asserts that
$$
|A_F(n,1,\cdots,1)|\leq \tau_m(n),
$$
where $\tau_m(n)$ denotes the divisor function of order $m$,
which is the number of representations of $n$ as the product of $m$ natural numbers.
For holomorphic cusp forms of $\mathrm{GL}_2$, the Ramanujan-Petersson conjecture was established by Deligne \cite{De}, Deligne and Serre \cite{Del-Ser}.
For Hecke-Maass cusp forms of $\mathrm{GL}_m$, where the conjecture remains open,
the best current record estimates are
$$
|A_F(n)| \leq n^{\frac{7}{64}}\tau(n), \quad |A_F(n,1)| \leq n^{\frac{5}{14}}\tau_3(n), \quad
|A_F(n,1,1)| \leq n^{\frac{9}{22}}\tau_4(n),
$$
$$
|A_F (n,1,\ldots,1)| \leq n^{\frac{1}{2}-\frac{1}{m^2+1}}\tau_m(n) \;(m \geq 5),
$$
which are due to Kim and Sarnak \cite{ks} for $2 \leq m \leq 4$
and Luo et al. \cite{LRS1}, \cite{LRS2} for $m \geq 5$.

On the other hand, the Rankin-Selberg theory gives
$$
\sum_{n\leq X}|A_F (n,1,\ldots,1)|^2\ll_F X,
$$
which implies that the Fourier coefficient $A_F(n,1,\cdots,1)$ behaves like a constant on average (see \cite[Remark 12.1.8]{GL1}).
Therefore, in order to explore the distribution of Fourier coefficients at a deeper level,
it is highly valuable and significantly more challenging
to explore the distribution of Fourier coefficients over sparse sequences,
such as the values of an $\ell$-variable nonsingular polynomial $P({\bf x})\in \mathbb{Z}[x_1,\ldots,x_{\ell}]$.

More precisely, we are concerned with the sum
\bna
\mathscr{S}_F(X)=\sum_{{\bf n}\in X\mathcal{B}\bigcap \mathbb{N}^{\ell}}A_F(P({\bf n}),1,\cdots,1)
\ena
for $X\rightarrow \infty$,
where $\mathcal{B}\subset \mathbb{R}^{\ell}$ is an $\ell$-dimensional box
such that $\min_{{\bf x}\in  X\mathcal{B}} P({\bf x}) \geq 0$ for all sufficiently large $X$.
The sum $\mathcal{S}_F(X)$ has been studied in several cases
when $F$ is a holomorphic cusp form of $\mathrm{GL}_2$.
For instance, Blomer \cite{blomer} proved that
\bna
\sum_{n\leq X}A_F(P(n))=c_{F,P} X+O_{F,P,\varepsilon}\big(X^{\frac{6}{7}+\varepsilon}\big)
\ena
for some constant $c_{F,P}\in \mathbb{C}$ and any $\varepsilon>0$,
where $F\in S_{\kappa}(N,\chi)$ is a holomorphic cusp form
of weight $\kappa\geq 4$ and character $\chi$ for $ \Gamma_0(N)$,
and $P(x)\in\mathbb{Z}[x]$ is an integral monic quadratic polynomial.
When $P(x) = x^2+d$ for some $d \in\mathbb{Z}$,
Templier \cite{T} also studied a similar sum
when $F$ is a classical weight 2 modular form of odd square-free level and trivial Nebentypus,
$X$ is about $d^{\frac{1}{2}}$ and $d>0$,
which is intimately related to the equidistribution of Heegner points.
Later Templier and Tsimerman \cite{TT} generalized the results of Blomer \cite{blomer} and Templier \cite{T} to any $\mathrm{GL}_2$ automorphic cuspidal representation.
In another direction, for $P({\bf x})=x_1^2 + x_2^2$,
Acharya \cite{acharya} proved that
\bna
\sum_{P({\bf n})\leq X} A_F(P({\bf n})) \ll_{F,\varepsilon} X^{\frac{1}{2} + \varepsilon}
\ena
for any $\varepsilon>0$,
where $F\in S_{\kappa}(4N,1)$ is a holomorphic cusp form of weight $\kappa$
for $ \Gamma_0(4N)$ with $N \in \mathbb{N}$ and trivial character.
Moreover, Pandey and Vaishya \cite{PandeyVaishya1} recently generalized
Acharya's result by investigating the distribution of $\{A_F(\mathcal{Q}(\underline{x}))\}$, where $\mathcal{Q}$
is a primitive integral positive-definite binary quadratic form of fixed discriminant $D < 0$
with class number $h(D) = 1$, and they recovered Acharya's result with explicit terms involving the conductor aspects,
including the weight and level. As an application, they also proved interesting results on the sign changes of the associated
Fourier coefficients.
Higher moments of classical modular forms associated with binary quadratic forms have also been intensively studied;
see Pandey and Vaishya \cite{PandeyVaishya2}.
For more interesting results, see Zhai \cite{Zhai}, Kumaraswamy \cite{ku}, Xu \cite{Xu},
Liu \cite{Liu}, Hua \cite{H}, Vaishya \cite{V1}, and the references therein.

Compared to the fruitful results of $\mathrm{GL}_2$,
much less has been done for modular forms of $\mathrm{GL}_3$.
Let $\pi$ be a Hecke-Maass cusp form of type $(\nu_1,\nu_2) \in \mathbb{C}^2$ for $\rm SL_3(\mathbb{Z})$
with normalized Fourier coefficients $A_\pi(m,n)$ such that $A_\pi(1,1)=1$.
Recently, Chanana and Singh \cite{CS1} proved that,
for a symmetric positive definite quadratic form $Q(x,y)$,
$$
\mathop{\sum\sum}_{n_1,n_2\leq X}A_\pi(Q(n_1,n_2),1) W_1\left(\frac{n_1}{X}\right) W_2\left(\frac{n_2}{X}\right)
\ll_{\pi,\varepsilon} X^{2-\frac{1}{68}+\varepsilon}
$$
for any $\varepsilon>0$, where $W_1$ and $W_2$ are smooth bump functions supported on the interval $[1,2]$.
Moreover, in \cite{CS2}, they established an asymptotic formula for the sum
\begin{equation*}
\mathop{\sum}_{\substack{1\leq n_1, n_2 \leq X^{1/2} \\ 1 \leq n_3 \leq X^{\frac{1}{s}}}}
A_\pi(Q(n_1,n_2)+n_3^s,1)\mathsf{a}(n_3),
\end{equation*}
where $Q(x,y)\in\mathbb{Z}[x,y]$ is a binary quadratic polynomial
and $\mathsf{a}(n): \mathbb{N} \rightarrow \mathbb{C}$ can be any bounded arithmetic function.

In this paper, we aim to explore more scenarios to gain a deeper understanding
of the distribution of $\mathrm{GL}_3$ Fourier coefficients $A_\pi(m,n)$ over sparse sets.
More precisely, let $\omega(x)$ be a smooth function compactly supported on $[1,2]$ and satisfying
\bna
\omega^{(j)}(x)\ll_j \Delta^j \quad\text{ for}\quad j\geq 0 \quad
\text{and} \quad
\int|\omega^{(j)}(\xi)|\mathrm{d}\xi\ll \Delta^{j-1} \quad \text{for}\quad j\geq1,
\ena
where $1\leq \Delta<X$. We are concerned with the following general sum
\bna
\sum_{n_1,\cdots,n_\ell,n_{\ell+1}\in \mathbb{Z}_+
\atop n=n_1^r+\cdots+n_{\ell}^r+n_{\ell+1}^s}
A_{\pi}(n,1)\omega\left(\frac{n}{X}\right),
\ena
where $r\geq2$, $s\geq 2$ and $\ell\geq2^{r-1}$ are positive integers.
Here $r$ and $s$ may be the same or different.

By Cauchy-Schwarz's inequality and Rankin-Selberg's estimate,
one sees that the trivial bound of the above sum is
$O_{\pi,\varepsilon}\big(X^{\frac{\ell}{r}+\frac{1}{s}+\varepsilon}\big)$.
In this paper, we will prove the following result.

\begin{theorem}\label{Th11}
Let $r\geq2$, $s\geq 2$, $\ell\geq2^{r-1}$ be integers and
$$\theta_0=\min\big\{1/r, 1/s\big\}.$$
For any $\varepsilon>0$, we have
\bna
\sum_{n_1,\cdots,n_\ell,n_{\ell+1}\in \mathbb{Z}_+ \atop n=n_1^r+\cdots+n_{\ell}^r+n_{\ell+1}^s}A_{\pi}(n,1)\omega\left(\frac{n}{X}\right)
\ll_{\pi,\varepsilon}
\begin{cases}
\Delta X^{\frac{\ell}{r}+\frac{1}{s}-1+\varepsilon}+R,
&\text{if \;} \frac{\ell}{r}+\frac{1}{s}\geq \frac{7}{2},\\
\\
\Delta X^{\frac{\ell}{r}+\frac{1}{s}-\left(1-
\left(\frac{7}{2}-\frac{\ell}{r}-\frac{1}{s}\right)\theta_0\right)+\varepsilon}+R,
&\text{if \;} \frac{\ell}{r}+\frac{1}{s}<\frac{7}{2},
\end{cases}
\ena
where
\begin{center}
$R=\begin{cases}
X^{\frac{\ell}{r}+\frac{1}{s}-\left(\frac{\ell}{2^{r-1}}+\frac{1}{2^{s-1}}-1\right)\theta_0+\varepsilon},
&\text{if \,} 2\le r\le 7,\, 2\le s\le 7,\\
\\
X^{\frac{\ell}{r}+\frac{1}{s}-\left(\frac{\ell}{2^{r-1}}+\frac{1}{2s(s-1)}-1\right)\theta_0+\varepsilon},
&\text{if \,} 2\le r\le 7,\, s\ge 8,\\
\\
X^{\frac{\ell}{r}+\frac{1}{s}-\left(\frac{\ell-2^{r-1}}{2r(r-1)}+\frac{1}{2^{s-1}}\right)\theta_0+\varepsilon},
&\text{if \,} r\ge8,\, 2\le s\le 7,\\
\\
X^{\frac{\ell}{r}+\frac{1}{s}-\left(\frac{\ell-2^{r-1}}{2r(r-1)}+\frac{1}{2s(s-1)}\right)\theta_0+\varepsilon},
&\text{if \,} r\ge8,\, s\ge8.\\
\end{cases}$
\end{center}
\end{theorem}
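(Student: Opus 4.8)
\emph{Plan of proof.} The plan is to run the circle method and to capture the cancellation in the Fourier coefficients $A_\pi(n,1)$ through the $\mathrm{GL}_3$ Voronoi summation formula on the major arcs, while controlling the mixed Weyl sums on the minor arcs. Put $N=2X$, $P_r=N^{1/r}$, $P_s=N^{1/s}$, and set
$$
f_r(\alpha)=\sum_{1\le t\le P_r}e(\alpha t^r),\qquad f_s(\alpha)=\sum_{1\le t\le P_s}e(\alpha t^s),\qquad D(\alpha)=\sum_{n\ge1}A_\pi(n,1)\,\omega\!\left(\frac{n}{X}\right)e(n\alpha),
$$
where $e(t)=e^{2\pi it}$. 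By orthogonality, the sum in Theorem~\ref{Th11} equals $\int_0^1 f_r(\alpha)^\ell f_s(\alpha)\,\overline{D(\alpha)}\,\mathrm d\alpha$. I would dissect $[0,1)$ with $Q=X^{\theta_0}$ — the largest value for which $Q\le P_r$ and $Q\le P_s$ simultaneously, so that both Weyl sums admit a complete-to-Gauss-sum approximation modulo every $q\le Q$; this constraint is exactly what plants $\theta_0$ in the final exponents — taking $\mathfrak M=\bigcup_{q\le Q}\bigcup_{(a,q)=1}\{\alpha:|q\alpha-a|\le Q/X\}$ and $\mathfrak m=[0,1)\setminus\mathfrak M$. (When $\min(r,s)=2$ the major arcs overlap and one instead uses the Duke--Friedlander--Iwaniec $\delta$-method, which produces the same arithmetic; in that case the asserted bound is in any event close to trivial.)

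\emph{Minor arcs.} A point of $\mathfrak m$ has no rational approximation of denominator $\le Q$, so Weyl's inequality (for $2\le r\le7$), respectively the Weyl-type estimate furnished by the now-proven main conjecture in Vinogradov's mean value theorem (for $r\ge8$), gives $\sup_{\mathfrak m}|f_r|\ll P_r^{1+\varepsilon}Q^{-\kappa_r}$ with $\kappa_r=2^{1-r}$ when $2\le r\le7$ and $\kappa_r\asymp(r(r-1))^{-1}$ when $r\ge8$, and likewise for $f_s$. Keeping $2^{r-1}$ of the $\ell$ factors of $f_r$ aside (legitimate since $\ell\ge2^{r-1}$) and pairing the remaining $2^{r-1}$ factors with $D$, one bounds
$$
\Big|\int_{\mathfrak m}f_r^\ell f_s\,\overline{D}\,\mathrm d\alpha\Big|\ \le\ \Big(\sup_{\mathfrak m}|f_r|\Big)^{\ell-2^{r-1}}\Big(\sup_{\mathfrak m}|f_s|\Big)\left(\int_0^1|D(\alpha)|^2\,\mathrm d\alpha\right)^{1/2}\left(\int_0^1|f_r(\alpha)|^{2^r}\,\mathrm d\alpha\right)^{1/2},
$$
and then invokes the Rankin--Selberg estimate $\int_0^1|D|^2\,\mathrm d\alpha=\sum_n|A_\pi(n,1)|^2\omega(n/X)^2\ll X^{1+\varepsilon}$ together with Hua's inequality $\int_0^1|f_r|^{2^r}\,\mathrm d\alpha\ll P_r^{2^r-r+\varepsilon}$. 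Substituting $Q=X^{\theta_0}$ and collecting exponents reproduces $R$ in each of the four regimes.

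\emph{Major arcs.} On $\mathfrak M(q,a)$, with $\alpha=a/q+\beta$, one has $f_k(a/q+\beta)=q^{-1}S_k(a,q)v_k(\beta)+O(q^{1+\varepsilon})$, where $S_k(a,q)=\sum_{b\bmod q}e(ab^k/q)$ and $v_k(\beta)=\int_0^{P_k}e(\beta u^k)\,\mathrm du$ satisfies $|v_k(\beta)|\ll\min(P_k,|\beta|^{-1/k})$; and, after inserting $e(na/q)$ into $D$, the $\mathrm{GL}_3$ Voronoi summation formula turns it into
$$
D(a/q+\beta)=q\sum_{\pm}\sum_{d\mid q}\sum_{m\ge1}\frac{A_\pi(m,d)}{md}\,S\!\left(\overline{a},\pm m;\frac{q}{d}\right)\widetilde\omega_\pm\!\left(\frac{m\,d^2}{q^3}\right),
$$
where $\widetilde\omega_\pm$ is a Hankel-type transform of $x\mapsto\omega(x/X)e(\beta x)$ (the dependence on $\beta$ having been absorbed into $\widetilde\omega_\pm$). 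I would bound the Kloosterman sums by Weil's estimate, analyse $\widetilde\omega_\pm$ by stationary phase (its effective support in $m$ being governed by the conductor $\max(\Delta,|\beta|X)$ of the weight — this is the origin of the factor $\Delta$), dispose of the $a$-sum and the resulting $q$-sum by the classical bounds for $\sum_{q\le Q}q^{-\ell-1}\big|\sum_{(a,q)=1}S_r(a,q)^\ell S_s(a,q)\big|$, integrate in $\beta$ against $|v_r|^\ell|v_s|$, and finally estimate the surviving dual sum over $m$ trivially and via Rankin--Selberg on the dual side. The $q$-series one is left with converges rapidly precisely when the effective number of variables $\ell/r+1/s$ exceeds $7/2$ (the $3$ reflecting the $\mathrm{GL}_3$ conductor exponent, the $1/2$ the square-root cancellation from the dual sum): for $\ell/r+1/s\ge7/2$ the small-$q$ terms dominate and the contribution is $\ll\Delta X^{\ell/r+1/s-1+\varepsilon}$, whereas for $\ell/r+1/s<7/2$ the tail up to $q\le X^{\theta_0}$ costs an extra factor $X^{(7/2-\ell/r-1/s)\theta_0}$, which yields the second branch of the theorem.

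\emph{Main obstacle.} The decisive step is this major-arc estimate: one must control the $\mathrm{GL}_3$ Voronoi dual sum uniformly in $q\le X^{\theta_0}$ and in $|\beta|$ — that is, estimate the arithmetic sum assembled from the Gauss-sum products $S_r(a,q)^\ell S_s(a,q)$, the Kloosterman sums produced by Voronoi, and the oscillatory transform $\widetilde\omega_\pm$ — sharply enough both to exhibit the threshold $\ell/r+1/s=7/2$ and to extract the correct power of $\Delta$; in particular the crude bound $D(\alpha)\ll X^{1+\varepsilon}$ is useless on $\mathfrak M$, so all of the saving must come from Voronoi. The minor-arc estimate is by comparison routine once Weyl/Vinogradov, Hua and Rankin--Selberg are in place, the only point needing care being that $f_s$ is genuinely small on $\mathfrak m$ (which it is, since $Q=X^{\theta_0}\le P_s$). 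One then adds the major- and minor-arc bounds.
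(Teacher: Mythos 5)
Your plan follows essentially the same route as the paper: a Farey dissection at level $X^{\theta_0}=X^{\min(1/r,1/s)}$, Cauchy--Schwarz on the minor arcs splitting off $2^{r-1}$ copies of the Weyl sum to pair Hua's inequality with the Rankin--Selberg bound, and $\mathrm{GL}_3$ Voronoi summation on the major arcs with Weil and Gauss-sum estimates for the resulting character sums, leading to the same threshold $\ell/r+1/s=7/2$ and the same four regimes for $R$. The only small divergences are cosmetic: the paper keeps the dissection height as a free parameter $P=X^\theta$ with $\theta\le\theta_0$ and optimizes it at the end rather than fixing $Q=X^{\theta_0}$ at the outset, and its generating function $\mathscr G(\alpha)$ is $D(-\alpha)$ rather than $\overline{D(\alpha)}$ (the $\mathrm{GL}_3$ Fourier coefficients $A_\pi(n,1)$ need not be real, so the conjugate would produce the dual sum $\sum A_{\tilde\pi}(n,1)\,\omega(n/X)$ instead; the estimates are identical since one simply applies Voronoi for $\tilde\pi$).
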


\begin{corollary}
Under the assumptions of Theorem~\ref{Th11}, for $\Delta=1$, we have the following result
\bna
\sum_{n_1,\cdots,n_\ell,n_{\ell+1}\in \mathbb{Z}_+ \atop n=n_1^r+\cdots+n_{\ell}^r+n_{\ell+1}^s}A_{\pi}(n,1)\omega\left(\frac{n}{X}\right)
\ll_{\pi,\varepsilon}
\begin{cases}
X^{\frac{\ell}{r}+\frac{1}{s}-1+\varepsilon}+R,
&\text{if \;} \frac{\ell}{r}+\frac{1}{s}\geq \frac{7}{2},\\
\\
R,
&\text{if \;} \frac{\ell}{r}+\frac{1}{s}<\frac{7}{2},
\end{cases}
\ena
where $R$ is as in Theorem~\ref{Th11}.
\end{corollary}

\begin{remark}
The parameter $\Delta$ controls the oscillation of the weight function $\omega$.
A larger $\Delta$ allows for more oscillation. Normally, we set $\Delta$ equal to 1.
\end{remark}

\begin{remark}
The result of Theorem \ref{Th11} provides a nontrivial estimate for a wide range of parameters,
though it may not be optimal in every case.
For instance, when $r= s= \ell= 2$, the conclusion of Theorem \ref{Th11} becomes trivial.
A more refined analysis is required for such cases to obtain sharper results.
\end{remark}

\begin{remark}
It is worth noting that for the general divisor function $\tau_k(n)$,
which is the Dirichlet coefficient of the simplest degree three $L$-function $\zeta^{k}(z)$,
there has been a long series of works by many number theorists regarding the corresponding sum
$$
\sum_{{\bf n}\in X\mathcal{B}\bigcap \mathbb{N}^{\ell}}\tau_k(P({\bf n}))
$$
for $X\rightarrow \infty$, where $\mathcal{B}\subset \mathbb{R}^{\ell}$
is an $\ell$-dimensional box such that
$\min_{{\bf x}\in X\mathcal{B}} P({\bf x}) \geq 0$ for all sufficiently large $X$;
see Du and Sun \cite{DS}, and the references therein.
\end{remark}

\noindent{\bf Notation.}
Throughout the paper, the letters $d$, $q$, $m$ and $n$,
with or without subscript, denote integers.
The letter $\varepsilon$ is an arbitrarily small positive constant,
not necessarily consistent across occurrences.
The symbol $\ll_{a,b,c}$ denotes that the implied constant depends at most on $a$, $b$ and $c$.

\section{Preliminaries}\label{pre}
\setcounter{equation}{0}

In this section, we will review some basic results associated with $\rm GL_3$ Hecke-Maass cusp forms which will be used in the proof.
Let $\pi$ be a Hecke-Maass cusp form of type $(\nu_1,\nu_2)\in \mathbb{C}^2$ for $\rm SL_3(\mathbb{Z})$,
which has a Fourier-Whittaker expansion with normalized Fourier coefficients $A_\pi(n,m)$
(for more details see \cite{DG}).
From Kim-Sarnak's bounds \cite{ks}, we have
\bea\label{kk}
A_\pi(n,m)\ll |mn|^{\vartheta+\varepsilon}
\eea
for any $\varepsilon>0$, where $\vartheta=5/14$.

We introduce the Langlands parameters $(\alpha_1,\alpha_2,\alpha_3)$,
which are defined as
\bna \label{langland}
\alpha_1= - \nu_1 -2\nu_2 + 1, \; \alpha_2 = -\nu_1 + \nu_2 \; \text{ and } \alpha_3 = 2 \nu_1 + \nu_2 - 1.
\ena
The generalized Ramanujan conjecture asserts that $|\text{Re}(\alpha_j)| = 0,\,1\leq j\leq3$, while the current record bound due to Luo et al. \cite{LRS2} is
\bna\label{re}
|\text{Re}(\alpha_j)|\leq \frac{1}{2}-\frac{1}{10},\quad 1\leq j\leq 3.
\ena

We first recall the Voronoi summation formula for $\rm GL_3$ (see \cite{GL1}, \cite{MS}).
\begin{lemma}\label{gl3voronoi}
Let $A_\pi(n,1)$ be the $(n,1)$-th Fourier coefficient of a Maass cusp form for $\rm SL_3(\mathbb{Z})$.
Suppose that $\phi(x)\in C_c^{\infty}(0, \infty)$.
Let $a,q\in\mathbb{Z}$ with $q\geq1$, $ (a,q)=1$ and $a\overline{a}\equiv1\;(\text{{\rm mod }} q)$.
Then
\bea\label{voronoi}
\sum_{n=1}^{\infty} A_\pi(n,1) e\left(\frac{an}{q}\right) \phi(n)
=q\sum_{\pm}\sum_{d_1 \mid q} \sum_{d_2 = 1}^{\infty}\frac{A_\pi(d_1,d_2)}{d_1 d_2} S\left(\overline{a}, \pm d_2; \frac{q}{d_1}\right)\Phi^{\pm}\left(\frac{d_1^2d_2}{q^3}\right),
\eea
where $S(a,b;c)$ denotes the classical Kloosterman sum,
and $\Phi^{\pm} $ is the integral transform of $\phi$ given by
\bea\label{Phi}
\Phi^{\pm}\left(x\right)
=\frac{1}{2\pi i}\int_{(\sigma)}x^{-s}\gamma_{\pm}(s)\widetilde{\phi}(-s)\mathrm{d}s,\qquad
\sigma>\max\limits_{1\leq j\leq 3}\{-1-\mathrm{Re}(\alpha_j)\},
\eea
with $\widetilde{\phi}(s)=\int_0^{\infty}\phi(x)x^{s-1}\mathrm{d}x$ the Mellin transform of $\phi$ and
\bna
\gamma_{\pm}(s)=\frac{1}{2\pi^{3(s+1/2)}}
\left(\prod_{j=1}^3\frac{\Gamma\left(\frac{1+s+\alpha_j}{2}\right)} {\Gamma\left(\frac{-s-\alpha_j}{2}\right)}
\mp i\prod_{j=1}^3\frac{\Gamma\left(\frac{2+s+\alpha_j}{2}\right)} {\Gamma\left(\frac{1-s-\alpha_j}{2}\right)}\right).
\ena
Here $\alpha_j$, $j=1,2,3$ are the Langlands parameters of $\pi$.
\end{lemma}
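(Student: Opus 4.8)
The plan is to deduce \eqref{voronoi} from the analytic continuation and functional equation of the additively twisted $\mathrm{GL}_3$ $L$-function, in the style of Goldfeld--Li and Miller--Schmid. Since $\phi\in C_c^\infty(0,\infty)$, Mellin inversion gives $\phi(x)=\frac{1}{2\pi i}\int_{(\sigma)}\widetilde\phi(s)\,x^{-s}\,\mathrm ds$ for every real $\sigma$, where $\widetilde\phi$ is entire and decays faster than any polynomial on vertical lines. Inserting this into the left-hand side of \eqref{voronoi} and interchanging summation with integration --- legitimate for $\sigma$ large by the bound \eqref{kk} --- expresses it as $\frac{1}{2\pi i}\int_{(\sigma)}\widetilde\phi(s)\,L\big(s,\pi;\tfrac aq\big)\,\mathrm ds$, where the additively twisted $L$-function $L\big(s,\pi;\tfrac aq\big)=\sum_{n\ge1}A_\pi(n,1)\,e\!\big(\tfrac{an}{q}\big)\,n^{-s}$ converges absolutely for $\Re s>1+\vartheta$.

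The core step is to continue $L\big(s,\pi;\tfrac aq\big)$ to an entire function of $s$ and to produce its functional equation. To this end I would open the additive character into multiplicative ones: for $(a,q)=1$ one writes $e(an/q)$ as a linear combination, over the divisors $q_1\mid q$ and the primitive Dirichlet characters $\chi\bmod q_1$, of $\tau(\chi)\chi(n)$, together with the usual (recursively handled) extra terms accounting for $(n,q)>1$; this makes $L\big(s,\pi;\tfrac aq\big)$ a finite combination of the twisted $L$-functions $L(s,\pi\otimes\chi)$. Since $\pi$ is cuspidal, each $L(s,\pi\otimes\chi)$ is entire and satisfies the standard $\mathrm{GL}_3$ functional equation $\Lambda(s,\pi\otimes\chi)=\epsilon(\pi\otimes\chi)\,\Lambda(1-s,\widetilde\pi\otimes\overline\chi)$, whose archimedean part is $\prod_{j=1}^{3}\pi^{-\frac{s+\alpha_j}{2}}\Gamma\!\big(\tfrac{s+\alpha_j}{2}\big)$ up to the sign refinement, whose root number is built from $\tau(\chi)^3$ and a power of the conductor, and in which $\widetilde\pi$ is the dual form with $A_{\widetilde\pi}(m,n)=A_\pi(n,m)$. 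Assembling these gives the continuation and functional equation of $L\big(s,\pi;\tfrac aq\big)$.

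I would then shift the contour in $\frac{1}{2\pi i}\int_{(\sigma)}\widetilde\phi(s)\,L\big(s,\pi;\tfrac aq\big)\,\mathrm ds$ to $\Re s=\sigma'$ with $\sigma'$ sufficiently negative; no poles are crossed, since all the twisted $L$-functions are entire and $\widetilde\phi$ is of rapid decay. On the shifted line I apply the functional equation to each $L(s,\pi\otimes\chi)$, expand the dual twisted $L$-functions into their Dirichlet series, and interchange the now absolutely convergent sums with the integral. Two things then happen. First, the archimedean factors collect into exactly the kernel $\gamma_\pm(s)$ of \eqref{Phi}: the dual summation variable runs over nonzero integers, which one writes as $\sum_{\pm}\sum_{d_2\ge1}$ with signed argument $\pm d_2$, and the archimedean transform depends on this sign, producing the two $\Gamma$-quotient products with relative sign $\mp i$. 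Second, the finite sum over the characters $\chi$, weighted by the Gauss sums $\tau(\chi)^3$ and corrected by the $(n,q)>1$ terms, reassembles by standard Gauss-sum identities into the Kloosterman sums $S(\overline a,\pm d_2;q/d_1)$, the ramification at $q$ forcing the two-index coefficients $A_\pi(d_1,d_2)$ with $d_1\mid q$; tracking the powers of $q$ and of $d_1,d_2$ places the argument of the archimedean transform at $d_1^2 d_2/q^3$. Recognising $\frac{1}{2\pi i}\int x^{-s}\gamma_\pm(s)\widetilde\phi(-s)\,\mathrm ds$ as $\Phi^\pm(x)$ yields the right-hand side of \eqref{voronoi}.

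The main obstacle is the bookkeeping in the character decomposition: tracking the imprimitive characters and the terms with $(n,q)>1$, and verifying that the resulting combinations of Gauss sums telescope into genuine Kloosterman sums with precisely the stated $\sum_{d_1\mid q}$ structure and normalisation. A secondary technical point is the rigorous justification of the contour shift and of the interchange of summation with integration: this rests on Stirling's formula, which controls the growth of the ratio of $\Gamma$-factors along vertical lines, together with the rapid decay of $\widetilde\phi$ and a choice of $\sigma'$ negative enough that the dual Dirichlet series converge absolutely on the shifted contour. Once these points are settled the identity follows. Alternatively, the same conclusion can be reached from the Miller--Schmid theory of automorphic distributions, where $\Phi^\pm$ arises as the action of a Weyl-group element on the Kirillov model of $\pi$; the $L$-function route sketched above is, however, the most direct.
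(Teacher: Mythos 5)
You should first note that the paper does not actually prove Lemma \ref{gl3voronoi}: it quotes the $\rm GL_3$ Voronoi formula from Goldfeld--Li \cite{GL1} and Miller--Schmid \cite{MS}, so there is no in-paper argument to match. Your sketch follows the $L$-function route (additive twist $\to$ Dirichlet twists $\to$ functional equations of $L(s,\pi\otimes\chi)$ $\to$ dual side), which is indeed the spirit of \cite{GL1}, and for \emph{prime} modulus $q$ it can be carried out essentially as you describe.

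The genuine gap is the step you defer as ``bookkeeping'': for composite $q$ the reduction of the additively twisted sum to primitive Dirichlet twists, and the reassembly of the resulting Gauss sums into the Kloosterman sums $S(\overline a,\pm d_2;q/d_1)$ with the precise $\sum_{d_1\mid q}A_\pi(d_1,d_2)$ structure, is not a routine Gauss-sum manipulation. Two specific difficulties arise. First, the terms with $(n,q)>1$ cannot simply be ``recursively handled''; one must invoke the $\rm GL_3$ Hecke relations to re-expand $A_\pi(n,1)$ for $n$ sharing factors with $q$, and it is exactly this step that produces the two-variable coefficients $A_\pi(d_1,d_2)$ with $d_1\mid q$ on the dual side. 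Second, when $q$ is not prime the characters occurring modulo proper divisors of $q$ are imprimitive, the conductors interact with the powers of $q$ in the root numbers, and the identity that collapses the resulting weighted sums of $\tau(\chi)^3$ into genuine Kloosterman sums of modulus $q/d_1$ is a nontrivial combinatorial statement --- historically this is precisely why the general-modulus formula was first established by the automorphic-distribution method of \cite{MS} rather than by the naive character decomposition, and why later derivations from the twisted functional equations require an additional structural argument rather than ``standard Gauss-sum identities.'' The analytic points you raise (contour shift, Stirling, absolute convergence of the dual series) are fine; but as written the proposal proves the lemma only for prime $q$, and the passage to general $q$ --- which the application in this paper needs, since $q$ ranges over all moduli up to $P$ --- is missing.
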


For the integral transform $\Phi^{\pm}$ in the Voronoi summation \eqref{voronoi},
we have the following result, which, in the $\rm GL_3$ case, is a modification of Lemma 2.7 in
Jiang and L\"{u} \cite{JL}.
\begin{lemma}\label{evaluation of G}
Let $ \Phi^{\pm}(x)$ be defined as in \eqref{Phi}.
Let $\phi(x)$ be a fixed smooth function compactly supported on $[aX, bX]$
with $b>a>0$ satisfying
\bna
\phi^{(j)}(x)\ll_j \left(X/R\right)^{-j} \quad\text{ for}\; j\geq 0 \quad
\text{and} \quad
\int|\phi^{(j)}(\xi)|\mathrm{d}\xi\ll Z(X/R)^{-j+1} \quad \text{for}\; j\geq1,
\ena
for some $R, Z\geq1$. Then we have
\bna
\Phi^{\pm}(x)\ll \left\{
\begin{array}{ll}
(xX)^{-A} , & \text{if }\, x > R^{3+\varepsilon}X^{-1} ,\\
(xX)^{\frac{1}{3}}Z, & \text{if }\, X^{-1}\ll x \leq R^{3+\varepsilon}X^{-1} ,\\
(xX)^{\frac{1}{2}}ZR^\varepsilon , & \text{if }\, x \ll X^{-1},
\end{array} \right.
\ena
where the implied constants may depend on $a$, $b$.
\end{lemma}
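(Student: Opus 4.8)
The plan is to rewrite $\Phi^\pm$ as a single integral of $\phi'$ against a Hankel-type kernel attached to $\gamma_\pm$, and then to split into the three ranges of $x$ according to whether the associated $\tau$-integral has a stationary point in the region where Stirling's formula applies.

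\smallskip
\noindent\emph{Reduction to a kernel.} Take $\Re s=\sigma\in(-3/5,-1/2)$ in \eqref{Phi}; this is admissible since $\max_j\{-1-\Re(\alpha_j)\}\le -3/5$ by \cite{LRS2}, so the contour lies to the right of all poles of $\gamma_\pm$. Integrating by parts once in the Mellin integral, $\widetilde\phi(-s)=\int_0^\infty\phi(u)u^{-s-1}\mathrm{d}u=\tfrac1s\int_0^\infty\phi'(u)u^{-s}\mathrm{d}u$ (the boundary terms vanish as $\phi\in C_c^\infty(0,\infty)$), and interchanging the two integrals — legitimate because $\gamma_\pm(s)/s\ll(1+|\tau|)^{3\sigma+1/2}$ is absolutely integrable for such $\sigma$ — one obtains
\[
\Phi^\pm(x)=\int_0^\infty\phi'(u)\,\mathsf{K}^\pm(xu)\,\mathrm{d}u,\qquad
\mathsf{K}^\pm(y):=\frac1{2\pi i}\int_{(\sigma)}\frac{y^{-s}}{s}\,\gamma_\pm(s)\,\mathrm{d}s .
\]
The hypotheses give $\int_0^\infty|\phi'(u)|\,\mathrm{d}u\ll Z$, and $xu\asymp xX$ on $\supp\phi'$, so everything reduces to estimating $\mathsf{K}^\pm(y)$ for $y\asymp xX$.

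\smallskip
\noindent\emph{The kernel $\mathsf{K}^\pm$.} Applying Stirling's formula to the Gamma factors in $\gamma_\pm$ — the $\mathrm{GL}_3$ analogue of \cite[Lemma 2.7]{JL} — yields $|\gamma_\pm(\sigma+i\tau)|\asymp(1+|\tau|)^{3\sigma+3/2}$ together with an asymptotic expansion whose leading term is $c(\sigma)|\tau|^{3\sigma+3/2}$ times a unimodular factor of phase $\asymp\tau\log|\tau|$. In the $\tau$-integral for $\mathsf{K}^\pm(y)$ the total phase is then $\asymp\tau\log|\tau|-\tau\log y$, with derivative vanishing at $|\tau_0|\asymp y^{1/3}$. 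For $y\ge1$ this stationary point lies in the Stirling regime; a stationary-phase evaluation (amplitude at $\tau_0$ of size $y^{-\sigma}|\tau_0|^{3\sigma+1/2}=y^{1/6}$, second derivative $\asymp|\tau_0|^{-1}\asymp y^{-1/3}$, hence the exponent $\tfrac16+\tfrac16=\tfrac13$) produces
\[
\mathsf{K}^\pm(y)=y^{1/3}\,e\big(\pm c_0\,y^{1/3}\big)\,V^\pm(y)+O_A\big(y^{-A}\big),
\]
with $c_0\neq0$ and $V^\pm$ a symbol of order $0$, i.e.\ $y^j(V^\pm)^{(j)}(y)\ll_j1$; in particular $|\mathsf{K}^\pm(y)|\ll y^{1/3}$ for $y\ge1$. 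For $0<y\le1$ there is no stationary point in the Stirling regime, and taking $\sigma$ a fixed value slightly below $-1/2$ and estimating trivially gives $|\mathsf{K}^\pm(y)|\ll y^{-\sigma}\ll y^{1/2}$.

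\smallskip
\noindent\emph{The three ranges.} If $x\ll X^{-1}$ then $xu\asymp xX\ll1$ on $\supp\phi'$, so the small-argument bound gives $\Phi^\pm(x)\ll(xX)^{1/2}\int|\phi'|\ll(xX)^{1/2}Z\ll(xX)^{1/2}ZR^\varepsilon$. If $X^{-1}\ll x\le R^{3+\varepsilon}X^{-1}$ then $xu\asymp xX\gg1$, so $|\mathsf{K}^\pm(xu)|\ll(xX)^{1/3}$ and $\Phi^\pm(x)\ll(xX)^{1/3}\int|\phi'|\ll(xX)^{1/3}Z$. Finally, if $x>R^{3+\varepsilon}X^{-1}$ then $xu\gg R^{3+\varepsilon}$ on $\supp\phi'$; inserting the oscillatory main term of $\mathsf{K}^\pm$ and integrating by parts repeatedly in $u$, the phase $c_0(xu)^{1/3}$ has derivative of size $(xX)^{1/3}/X$, while the amplitude $\phi'(u)(xu)^{1/3}V^\pm(xu)$ inherits from $\phi$ a variation on the scale $X/R$ (this is what $\int|\phi^{(j)}|\ll Z(X/R)^{-j+1}$ encodes), so each step gains a factor $\ll R/(xu)^{1/3}<R^{-\varepsilon/3}$. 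After $k$ steps, $\Phi^\pm(x)\ll_k(xX)^{1/3}Z\big(R/(xX)^{1/3}\big)^k$, and since $(xX)^{1/3}>R^{1+\varepsilon/3}$ this is $\ll_A(xX)^{-A}$ once $k=k(A)$ is large (the polynomially bounded factor $Z$ being harmless in this regime).

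\smallskip
\noindent\emph{Main obstacle.} The substance is the kernel analysis in the second step: establishing, uniformly in $y$ — in particular across the transition $y\asymp1$ where the stationary point meets the edge of the Stirling regime — the asymptotic expansion of the $\mathrm{GL}_3$ Hankel kernel $\mathsf{K}^\pm$ with the stated symbol bounds on the oscillatory main term and a genuine $O(y^{-A})$ error. This is exactly the promised modification of \cite[Lemma 2.7]{JL}. With it in hand the three estimates are, respectively, a trivial bound, a trivial bound, and repeated integration by parts; the only remaining care is to verify that differentiating the amplitude $k$ times against the $(xu)^{1/3}$-phase does not enlarge its frequency beyond $O(R/X)$, so that each integration by parts genuinely gains.
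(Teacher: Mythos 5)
Your argument is correct and is essentially the paper's: the paper simply invokes the proof of Lemma 2.7 of Jiang--L\"u with $m=3$ (tracking the extra factor $Z$ in $\int|\phi^{(j)}|$), and what you write out — Mellin reduction to the $\mathrm{GL}_3$ Hankel kernel, Stirling/stationary phase giving the $y^{1/3}$ oscillatory main term with phase $\asymp y^{1/3}$ and the $y^{1/2}$ small-argument bound, then trivial bounds in the two lower ranges and repeated integration by parts against the $(xu)^{1/3}$-phase in the range $x>R^{3+\varepsilon}X^{-1}$ — is precisely that argument, with the exponents matching the stated lemma. Your explicit flagging of the uniform kernel expansion as the key input coincides with what the citation to \cite{JL} supplies, so there is no gap relative to the paper's own proof.
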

\begin{proof}
Take $m=3$ in the proof of Lemma 2.7 in Jiang and L\"{u} \cite{JL} and take into account the extra $Z$ in the upper bound of $\int|\phi^{(j)}(\xi)|\mathrm{d}\xi$.
\end{proof}

We also employ the following result in our proof.
\begin{lemma}\label{rk}
We have
\begin{equation}\label{eq:lem-rk}
\sum_{n\le x}\bigl|A_{\pi}(a n, m)\bigr|^2 \ll_{\pi,\varepsilon} (am)^{2\vartheta+\varepsilon}\,x,
\end{equation}
where $ \vartheta= 5/14$.
\end{lemma}
\begin{proof}
Following Blomer \cite{B}, we separate the prime factors of $am$ from $n$.
We write $b\mid (am)^{\infty}$ to mean that every prime divisor of $b$ divides $am$
(i.e.\ $p\mid b\Rightarrow p\mid am$).
Then any $n$ can be written as $n=bn'$ with $b\mid (am)^{\infty}$ and $(n',am)=1$,
and hence
\begin{align}
\sum_{n\le x}\bigl|A_{\pi}(a n,m)\bigr|^2
&\le \sum_{b\mid (am)^{\infty}}\ \sum_{\substack{n'\le x/b\\ (n',abm)=1}}
\bigl|A_{\pi}(abn',m)\bigr|^2 \nonumber.
\end{align}
Then by using Hecke multiplicativity (cf.\ \cite[Theorem 6.4.11]{DG}),
\[
\sum_{n\le x}\bigl|A_{\pi}(a n,m)\bigr|^2
\ll \sum_{b\mid (am)^{\infty}} \bigl|A_{\pi}(ab,m)\bigr|^2
\sum_{n'\le x/b}\bigl|A_{\pi}(n',1)\bigr|^2.
\]
By the individual bound \eqref{kk}, we have
$|A_{\pi}(ab,m)|\ll_{\pi,\varepsilon} (abm)^{\vartheta+\varepsilon}$,
where $\vartheta=5/14$.
Moreover, the Rankin-Selberg theory (see \cite{DG}) gives
\[
\sum_{m^2n\le y}\bigl|A_{\pi}(n,m)\bigr|^2\ll_{\pi} y,
\]
which in particular implies that $\sum_{n\le y}|A_{\pi}(n,1)|^2\ll_{\pi} y$.
Finally, summing over $b\mid (am)^{\infty}$ contributes $O_{\varepsilon}((am)^{\varepsilon})$.
Combining these estimates yields \eqref{eq:lem-rk}.
\end{proof}

\section{Proof of Theorem \ref{Th11}}
\setcounter{equation}{0}

We will prove Theorem \ref{Th11}  by the classical circle method.
Let
\bna
\mathscr{S}(X)=\sum_{n_1,\cdots,n_\ell,n_{\ell+1}\in \mathbb{Z}_+
\atop n=n_1^r+\cdots+n_{\ell}^r+n_{\ell+1}^s}A_{\pi}(n,1)\omega\left(n/X \right).
\ena
For any $\alpha\in\mathbb{R}$, we define
\bea\label{FG}
\mathscr{F}_r(\alpha,X)=\sum_{n\leq X^{1/r}}e(\alpha n^r),\qquad
\mathscr{G}(\alpha,X)=\sum_{n\geq1}A_\pi(n,1)e(-\alpha n)\omega\left(n/X\right).
\eea
Then by the orthogonality relation
\begin{center}
$\int_{0}^{1} e(n\alpha)\mathrm{d}\alpha
=\begin{cases}
1,&\text{if } n=0,\\
0,&\text{if } n\in \mathbb{Z}\backslash \{0\},
\end{cases}$
\end{center}
one has
\bna
\mathscr{S}(X)=\int_{0}^{1} \mathscr{F}_r^\ell(\alpha,X)\mathscr{F}_s(\alpha,X )\mathscr{G}(\alpha,X)\mathrm{d}\alpha.
\ena
In order to apply the circle method, we choose the parameters $P$ and $Q$ such that
\bna
P=X^{\theta}, \qquad Q=X^{1-\theta},
\ena
where $\theta$ is a positive number to be decided later.
Note that $\mathscr{F}_r^\ell(\alpha, X )\mathscr{F}_s(\alpha, X )\mathscr{G}(\alpha, X)$
is a periodic function of period 1.
One further has
\bna
\mathscr{S}(X)=\int_{\frac{1}{Q}}^{1+\frac{1}{Q}} \mathscr{F}_r^\ell(\alpha, X )\mathscr{F}_s(\alpha, X )
\mathscr{G}(\alpha, X )\mathrm{d}\alpha.
\ena
By Dirichlet's lemma on rational approximation, each
$\alpha\in I:=\left[Q^{-1},1+Q^{-1}\right]$ can be written in the form
\bea\label{rational approximations}
\alpha=\frac{a}{q}+\beta,\qquad |\beta|\le\frac{1}{qQ}
\eea
for some integers $a$, $q$ with $1\le a\le q\le Q$ and $(a,q)=1$.
We denote by $ \mathfrak{M}(a,q)$ the set of $\alpha$ satisfying \eqref{rational approximations}
and define the major arcs and the minor arcs by
\bna
\mathfrak{M}=\bigcup_{1\le q\le P}\bigcup_{1\le a\le q\atop (a,q)=1}
\mathfrak{M}(a,q),\qquad
\mathfrak{m}=\Big[1/Q,1+1/Q\Big] \backslash \mathfrak{M}.
\ena
Then we have
\bea\label{deposition}
\mathscr{S}(X)=\int_\mathfrak{M} \mathscr{F}_r^\ell(\alpha, X ) \mathscr{F}_s(\alpha, X )\mathscr{G}(\alpha, X )\mathrm{d}\alpha
+\int _\mathfrak{m}\mathscr{F}_r^\ell(\alpha, X )\mathscr{F}_s(\alpha, X )\mathscr{G}(\alpha, X )\mathrm{d}\alpha.
\eea

We first consider the contribution from the minor arcs.
The proof is very similar to that in \cite[Section 2]{DS} by Du and Sun.
We include it here for completeness.
By Cauchy-Schwarz's inequality, we have
\bea\label{minorarcs00}
&&\int _\mathfrak{m}\mathscr{F}_r^\ell(\alpha, X )\mathscr{F}_s(\alpha, X )\mathscr{G}(\alpha, X )\mathrm{d}\alpha\nonumber\\
&\ll&\sup_{\alpha \in \mathfrak{m}} |\mathscr{F}_s(\alpha, X)||\mathscr{F}_r(\alpha, X)|^{\ell-2^{r-1}}
\Big(\int_{0}^{1} | \mathscr{F}_r(\alpha, X) |^{2^r}\mathrm{d}\alpha\Big)^\frac{1}{2}
\Big(\int_{0}^{1} |\mathscr{G}(\alpha, X )|^2\mathrm{d}\alpha \Big)^\frac{1}{2}.
\eea
For the first integral in \eqref{minorarcs00}, we apply Hua's lemma (see \cite[Lemma 2.5]{V}) to get
\bna\label{hua}
\int_{0}^{1}|\mathscr{F}_r(\alpha, X )|^{2^r} \mathrm{d}\alpha \ll X^{\frac{2^r}{r}-1+\varepsilon}.
\ena
For the last integral in \eqref{minorarcs00}, Lemma \ref{rk} yields
\bna\label{rkk}
\int_{0}^{1} |\mathscr{G}(\alpha, X )|^2\mathrm{d}\alpha \ll\sum_{n\ll X}|A_\pi(n,1)|^2\ll X.
\ena
Plugging these estimates into \eqref{minorarcs00} we obtain
\bea\label{minor arcs 1}
\int_\mathfrak{m}\mathscr{F}_r^\ell(\alpha,X)\mathscr{F}_s(\alpha,X)\mathscr{G}(\alpha,X)\mathrm{d}\alpha
\ll X^{\frac{2^{r-1}}{r}+\varepsilon}\sup_{\alpha \in \mathfrak{m}} |\mathscr{F}_s(\alpha, X)||\mathscr{F}_r(\alpha, X)|^{\ell-2^{r-1}}.
\eea
In order to make the upper bound as small as possible,
we distinguish four cases according to the values of $r$ and $s$.
Assume
\bea\label{theta}
\theta\leq \theta_0=\min\big\{1/r, 1/s\big\}.
\eea

\quad  (\romannumeral1) $2\le r\le 7$, $2\le s\le 7$.

In this case, we apply  Weyl's inequality (see \cite[Lemma 2.4]{V})
and use \eqref{theta} to get, for $\alpha \in \mathfrak{m}$,
\bea\label{Wely's inequality}
\mathscr{F}_r(\alpha, X) \ll X^{\frac{1}{r}+\varepsilon  }\left( P^{-1}+X^{-\frac{1}{r} } +QX^{-1}\right)^{\frac{1}{2^{r-1}}} \ll X^{\frac{1}{r}-\frac{\theta}{2^{r-1}}+\varepsilon},
\eea
and similarly,
\bea\label{F_2}
\mathscr{F}_s(\alpha, X)\ll X^{\frac{1}{s}-\frac{\theta}{2^{s-1}}+\varepsilon}.
\eea
Then by \eqref{minor arcs 1}-\eqref{F_2}, we derive
\bea\label{minor arcs}
&&\int_\mathfrak{m}\mathscr{F}_r^\ell(\alpha, X )\mathscr{F}_s(\alpha, X )\mathscr{G}(\alpha, X )\mathrm{d}\alpha\nonumber\\
&\ll&X^{\frac{2^{r-1}}{r}+\varepsilon}\cdot X^{\frac{1}{s}-\frac{\theta}{2^{s-1}}+\varepsilon}
\cdot X^{\left(\frac{1}{r}-\frac{\theta}{2^{r-1}}+\varepsilon\right)\left(\ell-2^{r-1}\right)}
\nonumber\\
&\ll& X^{\frac{\ell}{r}+\frac{1}{s}-\theta\left(\frac{\ell}{2^{r-1}}+\frac{1}{2^{s-1}}-1\right)+\varepsilon}.
\eea

\quad  (\romannumeral2) $2\le r\le 7$, $s\ge 8$.

In this case, we take Lemma 1.6 of \cite{LM} in place of Weyl's inequality, as it provides a superior result for $s\ge 8$, and get
\bea\label{Wely's inequality2}
\mathscr{F}_s(\alpha, X)\ll X^{\frac{1}{s}+\varepsilon }\left( P^{-1}+X^{-\frac{1}{s} }+QX^{-1}\right)^{\frac{1}{2s(s-1)}}
\ll X^{\frac{1}{s}-\frac{\theta}{2s(s-1)}+\varepsilon}.
\eea
Thus by \eqref{minor arcs 1}, \eqref{F_2} and \eqref{Wely's inequality2}, we have
\bea\label{minor arcs2}
&&\int _\mathfrak{m}\mathscr{F}_r^\ell(\alpha, X )\mathscr{F}_s(\alpha, X )\mathscr{G}(\alpha, X )\mathrm{d}\alpha\nonumber\\
&\ll& X^{\frac{2^{r-1}}{r}+\varepsilon}\cdot X^{\frac{1}{s}-\frac{\theta}{2s(s-1)}+\varepsilon}\cdot X^{\left(\frac{1}{r}-\frac{\theta}{2^{r-1}}+\varepsilon\right)\left(\ell-2^{r-1}\right)}\nonumber\\
&\ll&X^{\frac{\ell}{r}+\frac{1}{s}-\theta\left(\frac{\ell}{2^{r-1}}+\frac{1}{2s(s-1)}-1\right)+\varepsilon}.
\eea

\quad  (\romannumeral3) $r\ge8$, $2\le s\le 7$.

Similarly as in the case (\romannumeral2),
\bea\label{ F_r1}
\mathscr{F}_r(\alpha, X)\ll X^{\frac{1}{r}-\frac{\theta}{2r(r-1)}+\varepsilon}.
\eea
Thus by \eqref{minor arcs 1}, \eqref{F_2} and \eqref{ F_r1}, one has
\bea\label{romannumeral3}
&&\int _\mathfrak{m}\mathscr{F}_r^\ell(\alpha, X )\mathscr{F}_s(\alpha, X )\mathscr{G}(\alpha, X )\mathrm{d}\alpha\nonumber\\
&\ll& X^{\frac{2^{r-1}}{r}+\varepsilon}\cdot X^{\frac{1}{s}-\frac{\theta}{2^{s-1}}+\varepsilon}\cdot X^{\left(\frac{1}{r}-\frac{\theta}{2r(r-1)}+\varepsilon\right)\left(\ell-2^{r-1}\right)}\nonumber\\
&\ll&X^{\frac{\ell}{r}+\frac{1}{s}-\theta\left(\frac{\ell-2^{r-1}}{2r(r-1)}+\frac{1}{2^{s-1}}\right)+\varepsilon}.
\eea

\quad  (\romannumeral4) $r\ge8$, $s\ge 8$.

By \eqref{minor arcs 1}, \eqref{Wely's inequality2} and \eqref{ F_r1}, one has
\bea\label{romannumeral4}
&&\int _\mathfrak{m}\mathscr{F}_r^\ell(\alpha, X )\mathscr{F}_s(\alpha, X )\mathscr{G}(\alpha, X )\mathrm{d}\alpha\nonumber\\
&\ll& X^{\frac{2^{r-1}}{r}+\varepsilon}\cdot X^{\frac{1}{s}-\frac{\theta}{2s(s-1)}+\varepsilon} \cdot X^{\left(\frac{1}{r}-\frac{\theta}{2r(r-1)}+\varepsilon\right)\left(\ell-2^{r-1}\right)}\nonumber\\
&\ll&X^{\frac{\ell}{r}+\frac{1}{s}-\theta\left(\frac{\ell-2^{r-1}}{2r(r-1)}+\frac{1}{2s(s-1)}\right)+\varepsilon}.
\eea
This finishes the treatment of the minor arcs.
The integral over the major arcs will be handled in the next section,
and the proof of Theorem \ref{Th11} will be completed in the last section.

\section{ The integral over the major arcs}\label{majorr}

By the definition of the major arcs,  we have
\bea\label{major1}
&&\int_\mathfrak{M}\mathscr{F}_r^\ell(\alpha, X )\mathscr{F}_s(\alpha, X )\mathscr{G}(\alpha, X )\mathrm{d}\alpha\nonumber\\
&=&\sum_{q\le P}\;\sideset{}{^*}\sum_{a \bmod q}\int_{\mathfrak{M} (a,q)}\mathscr{F}_r^\ell(\alpha, X )\mathscr{F}_s(\alpha, X )\mathscr{G}(\alpha, X )\mathrm{d}\alpha\nonumber\\
&=&\sum_{q\le P}\int_{|\beta |\le \frac{1}{qQ}} \;\sideset{}{^*}\sum_{a \bmod q}\mathscr{F}_r^\ell\left(\frac{a}{q}+\beta, X\right) \mathscr{F}_s\left(\frac{a}{q}+\beta, X\right)\mathscr{G}\left(\frac{a}{q}+\beta, X\right)\mathrm{d}\beta,
\eea
where, throughout the paper the $*$ denotes the condition $(a,q)=1$.

For an asymptotic formula of $\mathscr{F}_r\left(a/q+\beta,X\right)$,
we quote the following result (see \cite[Theorem 4.1]{V}).
\begin{lemma}\label{F}
Let $(a, q)=1$, and $|\beta| \leq 1/(qQ)$. We have
\bna
\mathscr{F}_r\left(\frac{a}{q}+\beta, X\right)=\frac{G_r(a,0;q)} q\Psi_r(\beta) +O\left(q^{\frac{1}{2}+\varepsilon}\left(1+ |\beta|X\right)^\frac{1}{2}\right),
\ena
where $G_r(a,0;q)$ is the Gauss sum
\bea\label{Gauss sum}
G_r(a,b;q)=\sum_{x\bmod q}e\Big(\frac{ax^r+bx}{q}\Big)
\eea
and
\bea\label{Psi0}
\Psi_r(\beta)=\int_{0}^{X^{1/r}} e\left(\beta u^{r} \right) \mathrm{d} u.
\eea
\end{lemma}

By the $r$-th derivative test and the trivial estimate, one has
\bea\label{Psi}
\Psi_r(\beta)\ll\left(\frac{X}{1+|\beta|X} \right)^{1/r}.
\eea
By Lemma \ref{F}, we have
\bna
\mathscr{F}_r\left(\frac{a}{q}+\beta, X\right)=\frac{G_r(a,0;q)} q\Psi_r(\beta)+E_r(q,\beta),
\ena
where
\bea\label{Er}
E_r(q,\beta)\ll q^{\frac{1}{2}+\varepsilon}\left(1+|\beta|X\right)^\frac{1}{2}.
\eea
Thus
\bea\label{Fr}
\mathscr{F}_r^\ell\left(\frac{a}{q}+\beta, X\right)=\sum_{i=0}^{\ell}\binom{\ell}{i} \frac{G_r^{\ell-i}(a,0;q)} {q^{\ell-i}}\Psi_r^{\ell-i}(\beta)E_r^i(q,\beta).
\eea

For $\mathscr{G}(\alpha,X)$ in \eqref{FG}, we apply Lemma \ref{gl3voronoi}
with $\phi_{\beta}(x)=\omega\left(x/X\right)e(-\beta x)$  to obtain
\bea\label{eq:G}
\mathscr{G}\left(\frac{a}{q}+\beta,X\right)
&=&\sum_{n\geq 1}A_\pi(n,1)e\left(-\frac{a n}{q}\right)\phi_{\beta}(n)\nonumber\\
&=&q\sum_{\pm}\sum_{d_1 \mid q} \sum_{d_2 = 1}^{\infty} \frac{A_\pi(d_1,d_2)}{d_1 d_2} S\left(-\overline{a}, \pm d_2; \frac{q}{d_1}\right) \Phi_{\beta}^{\pm}\left(\frac{d_1^2d_2}{q^3 }\right),
\eea
where by \eqref{Phi}.
\bna
\Phi_{\beta}^{\pm}\left(x\right)=\frac{1}{2\pi i}\int_{(\sigma)}x^{-s}\gamma_{\pm}(s)\widetilde{\phi_{\beta}}(-s)\mathrm{d}s.
\ena
By \eqref{Fr} and \eqref{eq:G}, we have
\bea\label{d2}
&&\sideset{}{^*}\sum_{a \bmod q}\mathscr{F}_r^\ell\left(\frac{a}{q}+\beta, X\right)\mathscr{F}_s\left(\frac{a}{q}+\beta, X\right)\mathscr{G}\left(\frac{a}{q}+\beta, X\right)\nonumber\\
&=&\sideset{}{^*}\sum_{a \bmod q}\left\{\sum_{i=0}^{\ell}\binom{\ell}{i} \frac{G_r^{\ell-i}(a,0;q)}{q^{\ell-i}}\Psi_r^{\ell-i}(\beta)E_r^{i}(q,\beta)\right\} \left\{\frac{G_s(a,0;q)} q\Psi_s(\beta)+E_s(q,\beta)\right\}\nonumber\\
&&\left\{q\sum_{\pm}\sum_{d_1 \mid q} \sum_{d_2 = 1}^{\infty}\frac{A_\pi(d_1,d_2)}{d_1 d_2} S\left(-\overline{a}, \pm d_2; \frac{q}{d_1}\right)\Phi_{\beta}^{\pm}\left(\frac{d_1^2d_2}{q^3}\right)\right\}\nonumber\\
&=&\sum_{i=0}^{\ell}\binom{\ell}{i}\sum_{\pm}\sum_{d_1 \mid q}\frac{\Psi_r^{\ell-i}(\beta)E_r^{i}(q,\beta)\Psi_s(\beta)}{q^{\ell-i}}\sum_{d_2 = 1}^{\infty}\frac{A_\pi(d_1,d_2)}{d_1 d_2}\Phi_{\beta}^{\pm}\left(\frac{d_1^2d_2}{q^3}\right)\mathfrak{C}_{1,i}(d_1,\pm d_2;q)\nonumber\\
&&+q\sum_{i=0}^{\ell}\binom{\ell}{i}\sum_{\pm}\sum_{d_1 \mid q}\frac{\Psi_r^{\ell-i}(\beta)E_r^{i}(q,\beta)E_s(q,\beta)}{q^{\ell-i}}\sum_{d_2 = 1}^{\infty}\frac{A_\pi(d_1,d_2)}{d_1 d_2}\Phi_{\beta}^{\pm}\left(\frac{d_1^2d_2}{q^3}\right)\mathfrak{C}_{2,i}(d_1,\pm d_2;q),\nonumber\\
\eea
where
\bna\label{Ri}
&&\mathfrak{C}_{1,i}(d_1,d_2;q)=\sideset{}{^*}\sum_{a \bmod q}G_r^{\ell-i}(a,0;q)G_s(a,0;q)S\left(-\overline{a},  d_2; \frac{q}{d_1}\right), \\
&&\mathfrak{C}_{2,i}(d_1,d_2;q)=\sideset{}{^*}\sum_{a \bmod q}G_r^{\ell-i}(a,0;q)S\left(-\overline{a}, d_2; \frac{q}{d_1}\right).
\ena

By Theorem 4.2 in \cite{V}, we have
\bna
G_r(a,0;q) \ll q^{1-\frac{1}{r}+\varepsilon}.
\ena
Combining this estimate with Weil's bound for Kloosterman sum, we obtain
\bea\label{Ci}
&&\mathfrak{C}_{1,i}(d_1,d_2;q)\ll
q^{\left(1-\frac{1}{r}\right)(\ell-i)+2-\frac{1}{s}+\varepsilon}\Big(d_2,\frac{q}{d_1}\Big)^{1/2}\left(\frac{q}{d_1}\right)^{1/2}, \label{C1}\\
&&\mathfrak{C}_{2,i}(d_1,d_2;q)\ll q^{\left(1-\frac{1}{r}\right)(\ell-i)+1+\varepsilon} \Big(d_2,\frac{q}{d_1}\Big)^{1/2}\left(\frac{q}{d_1}\right)^{1/2}.\label{C2}
\eea

For the two sums involving the integral $\Phi_{\beta}^{\pm}(x)$ in \eqref{d2},
we have the following estimates.
\begin{proposition}\label{pro1}
For any $\varepsilon>0$, we have
\bna
\sum_{d_2=1}^{\infty}\frac{|A_{\pi}(d_1,d_2)|}{d_1d_2}\Big|\Phi_{\beta}^{\pm}\left(\frac{d_1^2d_2}{q^3}\right)\Big|\Big(d_2,\frac{q}{d_1}\Big)^{1/2}
\ll_{\pi,\varepsilon}d_1^{\vartheta-1}(1+|\beta|X)(\Delta+|\beta|X)X^\varepsilon,
\ena
where $\vartheta=5/14$.
\end{proposition}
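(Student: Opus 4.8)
The plan is to insert the test function $\phi_\beta(x)=\omega(x/X)e(-\beta x)$, which is supported on $[X,2X]$, into Lemma~\ref{evaluation of G} with suitably chosen parameters $R,Z$, and then to bound the resulting essentially finite $d_2$-sum by Cauchy--Schwarz together with the second moment estimate of Lemma~\ref{rk}.

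The first step is to identify the analytic data of $\phi_\beta$. By the Leibniz rule and the hypotheses on $\omega$, $\phi_\beta^{(j)}(x)\ll_j(\Delta/X+|\beta|)^j=\big((\Delta+|\beta|X)/X\big)^j$, so the pointwise hypothesis of Lemma~\ref{evaluation of G} holds with $R=\Delta+|\beta|X$. The crucial observation is that the $L^1$-hypothesis is met with the \emph{smaller} parameter $Z=1+|\beta|X$: writing $\int|\phi_\beta^{(j)}|$ out by Leibniz and using $\int|\omega^{(k)}(\xi/X)|\,\mathrm{d}\xi\ll X\Delta^{k-1}$ for $k\ge1$ and $\ll X$ for $k=0$, one gets $\int|\phi_\beta^{(j)}|\ll|\beta|^jX+(X/\Delta)(\Delta/X+|\beta|)^j$; since $|\beta|\le(\Delta+|\beta|X)/X$ and $(X/\Delta)(\Delta/X+|\beta|)^j=(1+|\beta|X/\Delta)\big((\Delta+|\beta|X)/X\big)^{j-1}$, both terms are $\ll(1+|\beta|X)\big((\Delta+|\beta|X)/X\big)^{j-1}=Z(X/R)^{-j+1}$. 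Here $R,Z\ge1$ because $\Delta>1$, and $R\ll X$ because $|\beta|X\le P$.

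With these parameters, Lemma~\ref{evaluation of G} shows that $\Phi_\beta^\pm(d_1^2d_2/q^3)$ contributes negligibly — after bounding $A_\pi$ trivially by \eqref{kk} — once $d_2>D:=R^{3+\varepsilon}q^3/(d_1^2X)$, while for $d_2\le D$ one has $|\Phi_\beta^\pm(d_1^2d_2/q^3)|\ll(d_1^2d_2X/q^3)^{1/3}Z$ when $d_1^2d_2/q^3\gg X^{-1}$ and $|\Phi_\beta^\pm(d_1^2d_2/q^3)|\ll(d_1^2d_2X/q^3)^{1/2}ZR^\varepsilon$ when $d_1^2d_2/q^3\ll X^{-1}$. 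Inserting the first (dominant) bound and pulling out the factors independent of $d_2$, the proposition reduces, up to a smaller contribution from the range $d_1^2d_2/q^3\ll X^{-1}$, to the estimate
\[
\sum_{d_2\le D}\frac{|A_\pi(d_1,d_2)|}{d_2^{2/3}}\,(d_2,q/d_1)^{1/2}\ll_{\pi,\varepsilon} D^{1/3}\,d_1^{\vartheta}\,X^{\varepsilon}.
\]
To prove this I would peel off the gcd: writing $g=(d_2,q/d_1)$ and $d_2=gn$, the left side is at most $\sum_{g\mid q/d_1}g^{-1/6}\sum_{n\le D/g}|A_\pi(d_1,gn)|\,n^{-2/3}$; on each dyadic block $n\sim N$, Cauchy--Schwarz and Lemma~\ref{rk} (applied, if needed, to the contragredient $\widetilde\pi$, which satisfies the same bounds, so that the summation variable occupies the first slot) give $\sum_{n\sim N}|A_\pi(d_1,gn)|\,n^{-2/3}\ll(d_1g)^{\vartheta+\varepsilon}N^{1/3}$; summing the geometric series over $N\le D/g$ and then over $g\mid q/d_1$ — where $\vartheta-1/2=-1/7<0$ makes $\sum_g g^{\vartheta-1/2}\ll(q/d_1)^\varepsilon$ — yields the displayed estimate. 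Combining everything, the main contribution is $\frac{X^{1/3}Z}{d_1^{1/3}q}\,D^{1/3}d_1^{\vartheta}X^\varepsilon=ZR^{1+\varepsilon}d_1^{\vartheta-1}X^\varepsilon\ll(1+|\beta|X)(\Delta+|\beta|X)\,d_1^{\vartheta-1}X^\varepsilon$, and the range $d_1^2d_2/q^3\ll X^{-1}$ is treated identically with $1/3$ replaced by $1/2$, producing only $(1+|\beta|X)\,d_1^{\vartheta-1}X^\varepsilon$.

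The main obstacle, and the only genuinely new input, is the bookkeeping that yields the sharp pair $(R,Z)=(\Delta+|\beta|X,\,1+|\beta|X)$ rather than the naive choice $Z=R=\Delta+|\beta|X$: the factor $\Delta$ must be kept out of the $L^1$-normalisation $Z$, for otherwise one loses a factor $\Delta/(1+|\beta|X)$ and the asserted bound fails exactly in the regime $|\beta|X<\Delta$. The remaining steps — the truncation, the gcd splitting, and the dyadic Cauchy--Schwarz — are routine, and the boundary cases (such as $D<1$, or one of the $x$-ranges being empty) are either vacuous or covered by the same estimates.
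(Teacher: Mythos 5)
Your proposal is correct and follows essentially the same route as the paper: you feed $\phi_\beta$ into Lemma \ref{evaluation of G} with exactly the paper's choice $R=\Delta+|\beta|X$, $Z=1+|\beta|X$, truncate $d_2$ at $R^{3+\varepsilon}q^3/(d_1^2X)$, extract the gcd $(d_2,q/d_1)$, and finish by dyadic Cauchy--Schwarz with Lemma \ref{rk}, arriving at the same two contributions $ZRd_1^{\vartheta-1}X^\varepsilon$ and $Zd_1^{\vartheta-1}X^\varepsilon$. Your explicit remark that Lemma \ref{rk} is invoked in the second slot via the contragredient is a small point of care the paper leaves implicit, but it is not a different argument.
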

\begin{proof}
Recall that $\phi_{\beta}(x)=\omega\left(x/X\right)e(-\beta x)$,
where $\omega(x)$ is a smooth function compactly supported in $[1, 2]$  and satisfies
\bna
\omega^{(j)}(x)\ll_j \Delta^j \quad\text{ for}\quad j\geq 0 \quad
\text{and} \quad
\int|\omega^{(j)}(\xi)|\mathrm{d}\xi\ll\Delta^{j-1} \quad \text{for}\quad j\geq1.
\ena
Thus we have
\bna
\phi_\beta^{(j)}(x)\ll\left(\frac{X}{\Delta+|\beta|X}\right)^{-j} \quad\text{ for}\; j\geq 0,
\ena
and
\bna
\int_{0}^{\infty}|\phi_{\beta}^{(j)}(x)|\mathrm{d}x
&=&\int_{0}^{\infty}\bigg|\sum_{0\leq i\leq j}\binom{j}{i}(-2\pi i \beta)^{j-i}e(-\beta x)X^{-i}\omega^{(i)}\left(\frac{x}{X}\right)\bigg|\mathrm{d}x\\
&\ll&|\beta|^jX+\sum_{1\leq i\leq j}|\beta|^{j-i}X^{-i+1}\int_{0}^{\infty}|\omega^{(i)}(\xi)|\mathrm{d}\xi\\
&\ll&|\beta|^{j}X+|\beta|^{j-1}\sum_{1\leq i\leq j}\left(\frac{\Delta}{|\beta|X}\right)^{i-1}\\
&\ll&(1+|\beta|X)\left(\frac{X}{\Delta+|\beta|X}\right)^{-j+1}
\ena
for any $j\geq 1$.
Hence, applying Lemma \ref{evaluation of G} with
$Z=1+|\beta|X$ and $R=\Delta+|\beta|X$, we obtain
\bea\label{eq:integral}
\Phi_\beta^{\pm}(x)\ll
\left\{
\begin{array}{ll}
(xX)^{-A} , & \text{if }\, x > (\Delta+|\beta|X)^{3+\varepsilon}X^{-1} ,\\
(xX)^{\frac{1}{3}}(1+|\beta|X), & \text{if }\, X^{-1}\ll x \leq (\Delta+|\beta|X)^{3+\varepsilon}X^{-1} ,\\
(xX)^{\frac{1}{2}}(1+|\beta|X)(\Delta+|\beta|X)^\varepsilon, & \text{if }\, x \ll X^{-1}.
\end{array} \right.
\eea
For any $\varepsilon>0$, we can choose $A$ sufficiently large to deduce that $\Phi_{\beta}^{\pm}\left(d_1^2d_2/q^3\right)$
is negligible unless $d_1^2d_2/q^3\ll (\Delta+|\beta|X)^{3+\varepsilon}X^{-1}$.
Therefore, applying dyadic subdivision to the sum over $d_2$ and using \eqref{eq:integral}, one has
\bea\label{eq:A sum}
&&\sum_{d_2=1}^{\infty}\frac{|A_{\pi}(d_1,d_2)|}{d_1d_2}\Big|\Phi_{\beta}^{\pm}\left(\frac{d_1^2d_2}{q^3}\right)\Big|\Big(d_2,\frac{q}{d_1}\Big)^{1/2}\nonumber\\
&\ll& X^\varepsilon \max_{ q^3d_1^{-2}X^{-1}\ll D_1\ll q^3d_1^{-2}X^{-1}(\Delta+|\beta|X)^{3+\varepsilon}}\;\sum_{d_2 \sim D_1}\frac{|A_{\pi}(d_1,d_2)|}{d_1d_2}\left(\frac{d_1^2d_2}{q^3}X\right)^{1/3}(1+|\beta|X)\Big(d_2,\frac{q}{d_1}\Big)^{1/2}\nonumber\\
&&+X^\varepsilon \max_{ D_2\ll q^3d_1^{-2}X^{-1}}\;\sum_{d_2 \sim D_2}\frac{|A_{\pi}(d_1,d_2)|}{d_1d_2}\left(\frac{d_1^2d_2}{q^3}X\right)^{1/2}(1+|\beta|X)\Big(d_2,\frac{q}{d_1}\Big)^{1/2}\nonumber\\
&\ll& \frac{X^{1/3+\varepsilon}(1+|\beta|X)}{d_1^{1/3}q}\max_{ q^3d_1^{-2}X^{-1}\ll D_1\ll q^3d_1^{-2}X^{-1}(\Delta+|\beta|X)^{3+\varepsilon}}\;\sum_{d_2 \sim D_1}\frac{|A_{\pi}(d_1,d_2)|}{d_2^{2/3}}\Big(d_2,\frac{q}{d_1}\Big)^{1/2}\nonumber\\
&&+\frac{X^{1/2+\varepsilon}(1+|\beta|X)}{q^{3/2}} \max_{ D_2\ll q^3d_1^{-2}X^{-1}}\;\sum_{d_2 \sim D_2}\frac{|A_{\pi}(d_1,d_2)|}{d_2^{1/2}}\Big(d_2,\frac{q}{d_1}\Big)^{1/2}\nonumber\\
&\ll& \frac{X^{1/3+\varepsilon}(1+|\beta|X)}{d_1^{1/3}q}\max_{ q^3d_1^{-2}X^{-1}\ll D_1\ll q^3d_1^{-2}X^{-1}(\Delta+|\beta|X)^{3+\varepsilon}}\;\sum_{\ell |qd_1^{-1}}\ell^{-1/6}\;\sum_{d_2\sim D_1\ell^{-1}}\frac{|A_{\pi}(d_1,\ell d_2)|}{d_2^{2/3}}\nonumber\\
&&+\frac{X^{1/2+\varepsilon}(1+|\beta|X)}{q^{3/2}}\max_{ D_2\ll q^3d_1^{-2}X^{-1}}\;\sum_{\ell |qd_1^{-1}}\sum_{d_2 \sim D_2\ell^{-1}}\frac{|A_{\pi}(d_1,\ell d_2)|}{d_2^{1/2}}.
\eea

By Cauchy-Schwarz's inequality and Lemma \ref{rk}, we have
\bea\label{eq:1}
&&\sum_{\ell |qd_1^{-1}}\ell^{-1/6}\;\sum_{d_2\sim D_1\ell^{-1}}\frac{|A_{\pi}(d_1,\ell d_2)|}{d_2^{2/3}}\nonumber\\
&\ll& \sum_{\ell |qd_1^{-1}}\ell^{-1/6}\left(\frac{D_1}{\ell}\right)^{-2/3}\left(\sum_{d_2\sim D_1\ell^{-1}}1\right)^{1/2}\left(\sum_{d_2\sim D_1\ell^{-1}}|A_\pi(d_1,\ell d_2)|^2\right)^{1/2}\nonumber\\
&\ll& \sum_{\ell |qd_1^{-1}}\ell^{-\frac{1}{6}}\left(\frac{D_1}{\ell}\right)^{-2/3}\left(\frac{D_1}{\ell}\right)^{1/2}\left((d_1\ell)^{2\vartheta+\varepsilon}\frac{D_1}{\ell}\right)^{1/2}\nonumber\\
&\ll& d_1^{\vartheta+\varepsilon}D_1^{1/3}\sum_{\ell |qd_1^{-1}}\ell^{-1/2+\vartheta+\varepsilon}\nonumber\\
&\ll&X^\varepsilon d_1^\vartheta D_1^{1/3},
\eea
where $\vartheta=5/14$.
Similarly,
\bea\label{eq:22}
&&\sum_{\ell |qd_1^{-1}}\sum_{d_2\sim D_2\ell^{-1}}\frac{|A_{\pi}(d_1,\ell d_2)|}{d_2^{1/2}}\nonumber\\
&\ll& \sum_{\ell |qd_1^{-1}}\left(\frac{D_2}{\ell}\right)^{-1/2}\left(\sum_{d_2\sim D_2\ell^{-1}}1\right)^{1/2}\left(\sum_{d_2\sim D_2\ell^{-1}}|A_\pi(d_1,\ell d_2)|^2\right)^{1/2}\nonumber\\
&\ll& \sum_{\ell |qd_1^{-1}}\left(\frac{D_2}{\ell}\right)^{-1/2}\left(\frac{D_2}{\ell}\right)^{1/2}\left((d_1\ell)^{2\vartheta+\varepsilon}\frac{D_2}{\ell}\right)^{1/2}\nonumber\\
&\ll& d_1^{\vartheta+\varepsilon}D_2^{1/2}\sum_{\ell |qd_1^{-1}}\ell^{-1/2+\vartheta+\varepsilon}\nonumber\\
&\ll&X^\varepsilon d_1^\vartheta D_2^{1/2}.
\eea
Inserting \eqref{eq:1} and \eqref{eq:22} into \eqref{eq:A sum}, we obtain
\bna
&&\sum_{d_2=1}^{\infty}\frac{|A_{\pi}(d_1,d_2)|}{d_1d_2}\Big|\Phi_{\beta}^{\pm}\left(\frac{d_1^2d_2}{q^3}\right)\Big|\Big(d_2,\frac{q}{d_1}\Big)^{1/2}\nonumber\\
&\ll& \frac{X^{1/3+\varepsilon}(1+|\beta|X)}{d_1^{1/3}q}\max_{ q^3d_1^{-2}X^{-1}\ll D_1\ll q^3d_1^{-2}X^{-1}(\Delta+|\beta|X)^{3+\varepsilon}}\;d_1^\vartheta D_1^{1/3}\nonumber\\
&&+\frac{X^{1/2+\varepsilon}(1+|\beta|X)}{q^{3/2}} \max_{ D_2\ll q^3d_1^{-2}X^{-1}}\;d_1^\vartheta D_2^{1/2}\nonumber\\
&\ll& \frac{X^{1/3+\varepsilon}(1+|\beta|X)}{d_1^{1/3}q}d_1^\vartheta \left(q^3d_1^{-2}X^{-1}(\Delta+|\beta|X)^{3+\varepsilon}\right)^{1/3}\nonumber\\
&&+\frac{X^{1/2+\varepsilon}(1+|\beta|X)}{q^{3/2}}d_1^\vartheta \left(q^3d_1^{-2}X^{-1}\right)^{1/2}\nonumber\\
&\ll&d_1^{\vartheta-1}(1+|\beta|X)(\Delta+|\beta|X)X^\varepsilon+d_1^{\vartheta-1}(1+|\beta|X)X^\varepsilon\nonumber\\
&\ll&d_1^{\vartheta-1}(1+|\beta|X)(\Delta+|\beta|X)X^\varepsilon,
\ena
where $\vartheta=5/14$.
This completes the proof of the proposition.
\end{proof}

Now we return to the estimation of the integral over the major arcs.
Inserting \eqref{d2} into \eqref{major1}, we have
\bea\label{med-estimate}
\int_\mathfrak{M}\mathscr{F}_r^\ell(\alpha, X )\mathscr{F}_s(\alpha, X)\mathscr{G}(\alpha, X )\mathrm{d}\alpha
=\sum_{\pm}\sum_{i=0}^{\ell}\binom{\ell}{i}\mathbf{M}_i^{\pm}+\sum_{\pm}\sum_{i=0}^{\ell}\binom{\ell}{i}\mathbf{R}_i^{\pm},
\eea
where
\bea\label{MMM}
\mathbf{M}_i^{\pm}=\sum_{q\le P}\int_{|\beta |\le \frac{1}{qQ}} \;\sum_{d_1 \mid q}\frac{\Psi_r^{\ell-i}(\beta)E_r^{i}(q,\beta)\Psi_s(\beta)}{q^{\ell-i}}\sum_{d_2 = 1}^{\infty}\frac{A_\pi(d_1,d_2)}{d_1 d_2}\Phi_{\beta}^{\pm}\left(\frac{d_1^2d_2}{q^3}\right)\mathfrak{C}_{1,i}(d_1,\pm d_2;q)\mathrm{d}\beta,%\nonumber\\
\eea
and
\bea\label{RRR}
\mathbf{R}_i^{\pm} =\sum_{q\le P}q\int_{|\beta |\le \frac{1}{qQ}} \;\sum_{d_1 \mid q}\frac{\Psi_r^{\ell-i}(\beta)E_r^{i}(q,\beta)E_s(q,\beta)}{q^{\ell-i}}\sum_{d_2 = 1}^{\infty}\frac{A_\pi(d_1,d_2)}{d_1 d_2}\Phi_{\beta}^{\pm}\left(\frac{d_1^2d_2}{q^3}\right)\mathfrak{C}_{2,i}(d_1,\pm d_2;q)\mathrm{d}\beta.%\nonumber\\
\eea

By \eqref{Psi}, \eqref{Er}, \eqref{C1} and Proposition \ref{pro1},
$\mathbf{M}_i^{\pm}$ in \eqref{MMM} is bounded by
\bea\label{M-estimate}
&&X^\varepsilon\sum_{q\le P}q^{-\frac{\ell-i}{r}-\frac{1}{s}
+\frac{5}{2}}\sum_{d_1 \mid q}d_1^{\vartheta-3/2}\int_{|\beta |\le \frac{1}{qQ}} \;\left(\frac{X}{1+|\beta|X} \right)^{\frac{\ell-i}{r}+\frac{1}{s}}q^{\frac{i}{2}}(1+|\beta|X)^{\frac{i}{2}+1}(\Delta+|\beta|X)\mathrm{d}\beta\nonumber\\
&\ll&X^{\frac{\ell-i}{r}+\frac{1}{s}+\varepsilon}\sum_{q\le P}q^{-\frac{\ell-i}{r}-\frac{1}{s}+\frac{5+i}{2}}\int_{|\beta |\le \frac{1}{qQ}} \;\left(1+|\beta|X\right)^{1+\frac{i}{2}-\frac{\ell-i}{r}-\frac{1}{s}}(\Delta+|\beta|X)\mathrm{d}\beta\nonumber\\
&\ll&X^{\frac{\ell-i}{r}+\frac{1}{s}+\varepsilon}\sum_{q\le P}q^{-\frac{\ell-i}{r}-\frac{1}{s}+\frac{5+i}{2}}\int_{|\beta |\le \frac{1}{X}} \;\Delta \mathrm{d}\beta\nonumber\\
&&+\Delta X^{1+\frac{i}{2}+\varepsilon}\sum_{q\le P}q^{-\frac{\ell-i}{r}-\frac{1}{s}+\frac{5+i}{2}}\int_{\frac{1}{X}<|\beta |\le \frac{1}{qQ}} \;|\beta|^{1+\frac{i}{2}-\frac{\ell-i}{r}-\frac{1}{s}}\mathrm{d}\beta\nonumber\\
&&+X^{2+\frac{i}{2}+\varepsilon}\sum_{q\le P}q^{-\frac{\ell-i}{r}-\frac{1}{s}+\frac{5+i}{2}}\int_{\frac{1}{X}<|\beta |\le \frac{1}{qQ}} \;|\beta|^{2+\frac{i}{2}-\frac{\ell-i}{r}-\frac{1}{s}}\mathrm{d}\beta,
\eea
where $\vartheta=5/14$.
Recall that $P=X^\theta$ and $PQ=X$. The first term in \eqref{M-estimate} can be estimated as
\bea\label{first}
&&X^{\frac{\ell-i}{r}+\frac{1}{s}+\varepsilon}\sum_{q\le P}q^{-\frac{\ell-i}{r}-\frac{1}{s}+\frac{5+i}{2}}\int_{|\beta |\le \frac{1}{X}} \;\Delta \mathrm{d}\beta\nonumber\\
&\ll&
\begin{cases}
\Delta X^{\frac{\ell-i}{r}+\frac{1}{s}-1+\varepsilon},
& \text{ if } \frac{\ell}{r}\ge \frac{7}{2}+i\left( \frac{1}{2}+\frac{1}{r}\right)-\frac{1}{s}, \\
\\
\Delta X^{\frac{\ell}{r}+\frac{1}{s}-1-\left(\frac{\ell}{r}+\frac{1}{s}-\frac{7}{2}\right)\theta-\left(\frac{1}{r}-(\frac{1}{2}+\frac{1}{r})\theta\right)i+\varepsilon},
& \text{ if } \frac{\ell}{r}< \frac{7}{2}+i\left( \frac{1}{2}+\frac{1}{r}\right)-\frac{1}{s}.
\end{cases}
\eea
Similarly, the second term in \eqref{M-estimate} can be estimated as
\bea\label{eq:2}
&&\Delta X^{1+\frac{i}{2}+\varepsilon}\sum_{q\le P}q^{-\frac{\ell-i}{r}-\frac{1}{s}+\frac{5+i}{2}}\int_{\frac{1}{X}<|\beta |\le \frac{1}{qQ}} \;|\beta|^{1+\frac{i}{2}-\frac{\ell-i}{r}-\frac{1}{s}}\mathrm{d}\beta\nonumber\\
&\ll&\Delta X^{1+\frac{i}{2}+\varepsilon}\sum_{q\le P}q^{-\frac{\ell-i}{r}-\frac{1}{s}+\frac{5+i}{2}}
\begin{cases}
\left(\frac{1}{X}\right)^{2+\frac{i}{2}-\frac{\ell-i}{r}-\frac{1}{s}},
& \text{ if } \frac{\ell}{r}\ge 2+i\left( \frac{1}{2}+\frac{1}{r}\right)-\frac{1}{s},\nonumber\\
\\
\left(\frac{1}{qQ}\right)^{2+\frac{i}{2}-\frac{\ell-i}{r}-\frac{1}{s}},
& \text{ if } \frac{\ell}{r}< 2+i\left( \frac{1}{2}+\frac{1}{r}\right)-\frac{1}{s},
\end{cases}\nonumber\\
&\ll&
\begin{cases}
\Delta X^{\frac{\ell-i}{r}+\frac{1}{s}-1+\varepsilon},
& \text{ if } \frac{\ell}{r}\ge \frac{7}{2}+i\left( \frac{1}{2}+\frac{1}{r}\right)-\frac{1}{s},\nonumber\\
\\
\Delta X^{\frac{\ell}{r}+\frac{1}{s}-1-\left(\frac{\ell}{r}+\frac{1}{s}-\frac{7}{2}\right)\theta-\left(\frac{1}{r}-(\frac{1}{2}+\frac{1}{r})\theta\right)i+\varepsilon},
& \text{ if } 2+i\left( \frac{1}{2}+\frac{1}{r}\right)-\frac{1}{s}\le \frac{\ell}{r}< \frac{7}{2}+i\left( \frac{1}{2}+\frac{1}{r}\right)-\frac{1}{s},\nonumber\\
\\
\Delta X^{\frac{\ell}{r}+\frac{1}{s}-1-\left(\frac{\ell}{r}+\frac{1}{s}-\frac{7}{2}\right)\theta-\left(\frac{1}{r}-(\frac{1}{2}+\frac{1}{r})\theta\right)i+\varepsilon},
& \text{ if } \frac{\ell}{r}< 2+i\left( \frac{1}{2}+\frac{1}{r}\right)-\frac{1}{s},
\end{cases}\nonumber\\
&\ll&
\begin{cases}
\Delta X^{\frac{\ell-i}{r}+\frac{1}{s}-1+\varepsilon},
&\text{ if } \frac{\ell}{r}\ge \frac{7}{2}+i\left( \frac{1}{2}+\frac{1}{r}\right)-\frac{1}{s}, \\
\\
\Delta X^{\frac{\ell}{r}+\frac{1}{s}-1-\left(\frac{\ell}{r}+\frac{1}{s}-\frac{7}{2}\right)\theta-\left(\frac{1}{r}-(\frac{1}{2}+\frac{1}{r})\theta\right)i+\varepsilon},
& \text{ if } \frac{\ell}{r}< \frac{7}{2}+i\left( \frac{1}{2}+\frac{1}{r}\right)-\frac{1}{s},
\end{cases}
\eea
and the last term in \eqref{M-estimate} can be estimated as
\bea\label{eq:3}
&&X^{2+\frac{i}{2}+\varepsilon}\sum_{q\le P}q^{-\frac{\ell-i}{r}-\frac{1}{s}+\frac{5+i}{2}}\int_{\frac{1}{X}<|\beta |\le \frac{1}{qQ}} \;|\beta|^{2+\frac{i}{2}-\frac{\ell-i}{r}-\frac{1}{s}}\mathrm{d}\beta\nonumber\\
&\ll&X^{2+\frac{i}{2}+\varepsilon}\sum_{q\le P}q^{-\frac{\ell-i}{r}-\frac{1}{s}+\frac{5+i}{2}}
\begin{cases}
\left(\frac{1}{X}\right)^{3+\frac{i}{2}-\frac{\ell-i}{r}-\frac{1}{s}},
& \text{ if } \frac{\ell}{r}\ge 3+i\left( \frac{1}{2}+\frac{1}{r}\right)-\frac{1}{s}, \\
\\
\left(\frac{1}{qQ}\right)^{3+\frac{i}{2}-\frac{\ell-i}{r}-\frac{1}{s}},
& \text{ if } \frac{\ell}{r}< 3+i\left( \frac{1}{2}+\frac{1}{r}\right)-\frac{1}{s},
\end{cases}\nonumber\\
&\ll&
\begin{cases}
X^{\frac{\ell-i}{r}+\frac{1}{s}-1+\varepsilon},
& \text{ if } \frac{\ell}{r}\ge \frac{7}{2}+i\left( \frac{1}{2}+\frac{1}{r}\right)-\frac{1}{s}, \\
\\
X^{\frac{\ell}{r}+\frac{1}{s}-1-\left(\frac{\ell}{r}+\frac{1}{s}-\frac{7}{2}\right)\theta-\left(\frac{1}{r}-(\frac{1}{2}+\frac{1}{r})\theta\right)i+\varepsilon},
& \text{ if } 3+i\left( \frac{1}{2}+\frac{1}{r}\right)-\frac{1}{s}\le \frac{\ell}{r}< \frac{7}{2}+i\left( \frac{1}{2}+\frac{1}{r}\right)-\frac{1}{s},\nonumber\\
\\
X^{\frac{\ell}{r}+\frac{1}{s}-1-\left(\frac{\ell}{r}+\frac{1}{s}-\frac{7}{2}\right)\theta-\left(\frac{1}{r}-(\frac{1}{2}+\frac{1}{r})\theta\right)i+\varepsilon},
& \text{ if } \frac{\ell}{r}< 3+i\left( \frac{1}{2}+\frac{1}{r}\right)-\frac{1}{s},
\end{cases}\\
&\ll&
\begin{cases}
X^{\frac{\ell-i}{r}+\frac{1}{s}-1+\varepsilon},
& \text{ if } \frac{\ell}{r}\ge \frac{7}{2}+i\left( \frac{1}{2}+\frac{1}{r}\right)-\frac{1}{s}, \\
\\
X^{\frac{\ell}{r}+\frac{1}{s}-1-\left(\frac{\ell}{r}+\frac{1}{s}-\frac{7}{2}\right)\theta-\left(\frac{1}{r}-(\frac{1}{2}+\frac{1}{r})\theta\right)i+\varepsilon},
& \text{ if } \frac{\ell}{r}< \frac{7}{2}+i\left( \frac{1}{2}+\frac{1}{r}\right)-\frac{1}{s}.
\end{cases}
\eea
By \eqref{M-estimate}-\eqref{eq:3}, we conclude that
\bea\label{M-estimate2}
\mathbf{M}_i^{\pm}\ll\begin{cases}
\Delta X^{\frac{\ell}{r}+\frac{1}{s}-1-\left(\frac{\ell}{r}+\frac{1}{s}-\frac{7}{2}\right)\theta-\left(\frac{1}{r}-(\frac{1}{2}+\frac{1}{r})\theta\right)i+\varepsilon},
& \text{ if } \frac{\ell}{r}< \frac{7}{2}+i\left( \frac{1}{2}+\frac{1}{r}\right)-\frac{1}{s}, \\
\\
\Delta  X^{\frac{\ell-i}{r}+\frac{1}{s}-1+\varepsilon},
& \text{ if } \frac{\ell}{r}\ge \frac{7}{2}+i\left( \frac{1}{2}+\frac{1}{r}\right)-\frac{1}{s}.
\end{cases}
\eea

By \eqref{Psi}, \eqref{Er}, \eqref{C2} and Proposition \ref{pro1},
$\mathbf{R}_i^{\pm}$ in \eqref{RRR} is at most
\bea\label{R-estimate}
&&X^\varepsilon\sum_{q\le P}q^{-\frac{\ell-i}{r}+\frac{5}{2}}\sum_{d_1 \mid q}d_1^{\vartheta-3/2}\int_{|\beta |\le \frac{1}{qQ}} \;\left(\frac{X}{1+|\beta|X} \right)^{\frac{\ell-i}{r}}q^{\frac{i+1}{2}}(1+|\beta|X)^{\frac{3+i}{2}}(\Delta+|\beta|X)\mathrm{d}\beta\nonumber\\
&\ll&X^{\frac{\ell-i}{r}+\varepsilon}\sum_{q\le P}q^{-\frac{\ell-i}{r}+\frac{i}{2}+3}\int_{|\beta |\le \frac{1}{qQ}} \;\left(1+|\beta|X\right)^{\frac{3+i}{2}-\frac{\ell-i}{r}}(\Delta+|\beta|X) \mathrm{d}\beta\nonumber\\
&\ll&X^{\frac{\ell-i}{r}+\varepsilon}\sum_{q\le P}q^{-\frac{\ell-i}{r}+\frac{i}{2}+3}\int_{|\beta |\le \frac{1}{X}} \;\Delta \mathrm{d}\beta\nonumber\\
&&+\Delta X^{\frac{3+i}{2}+\varepsilon}\sum_{q\le P}q^{-\frac{\ell-i}{r}+\frac{i}{2}+3}\int_{\frac{1}{X}<|\beta |\le \frac{1}{qQ}} \;|\beta|^{\frac{3+i}{2}-\frac{\ell-i}{r}}\mathrm{d}\beta\nonumber\\
&&+X^{\frac{5+i}{2}+\varepsilon}\sum_{q\le P}q^{-\frac{\ell-i}{r}+\frac{i}{2}+3}\int_{\frac{1}{X}<|\beta |\le \frac{1}{qQ}} \;|\beta|^{\frac{5+i}{2}-\frac{\ell-i}{r}}\mathrm{d}\beta,
\eea
where $\vartheta=5/14$.
The first term in \eqref{R-estimate} can be estimated as
\bea\label{Rfirst}
&&X^{\frac{\ell-i}{r}+\varepsilon}\sum_{q\le P}q^{-\frac{\ell-i}{r}+\frac{i}{2}+3}\int_{|\beta |\le \frac{1}{X}} \;\Delta \mathrm{d}\beta\nonumber\\
&\ll&
\begin{cases}
\Delta X^{\frac{\ell-i}{r}-1+\varepsilon},
& \text{ if } \frac{\ell}{r}\ge 4+i\left( \frac{1}{2}+\frac{1}{r}\right), \\
\\
\Delta X^{\frac{\ell}{r}-1-\left(\frac{\ell}{r}-4\right)\theta-\left(\frac{1}{r}-(\frac{1}{2}+\frac{1}{r})\theta\right)i+\varepsilon},
& \text{ if } \frac{\ell}{r}< 4+i\left( \frac{1}{2}+\frac{1}{r}\right).
\end{cases}
\eea
Similarly, the second term in \eqref{R-estimate} can be estimated as
\bea\label{eq:2 2}
&&\Delta X^{\frac{3+i}{2}+\varepsilon}\sum_{q\le P}q^{-\frac{\ell-i}{r}+\frac{i}{2}+3}\int_{\frac{1}{X}<|\beta |\le \frac{1}{qQ}} \;|\beta|^{\frac{3+i}{2}-\frac{\ell-i}{r}}\mathrm{d}\beta\nonumber\\
&\ll&\Delta X^{\frac{3+i}{2}+\varepsilon}\sum_{q\le P}q^{-\frac{\ell-i}{r}+\frac{i}{2}+3}
\begin{cases}
\left(\frac{1}{X}\right)^{\frac{5+i}{2}-\frac{\ell-i}{r}},
& \text{ if } \frac{\ell}{r}\ge \frac{5}{2}+i\left( \frac{1}{2}+\frac{1}{r}\right), \\
\\
\left(\frac{1}{qQ}\right)^{\frac{5+i}{2}-\frac{\ell-i}{r}},
& \text{ if } \frac{\ell}{r}< \frac{5}{2}+i\left( \frac{1}{2}+\frac{1}{r}\right),
\end{cases}\nonumber\\
&\ll&
\begin{cases}
\Delta X^{\frac{\ell-i}{r}-1+\varepsilon},
& \text{ if } \frac{\ell}{r}\ge 4+i\left( \frac{1}{2}+\frac{1}{r}\right),\nonumber\\
\\
\Delta X^{\frac{\ell}{r}-1-\left(\frac{\ell}{r}-4\right)\theta-\left(\frac{1}{r}-(\frac{1}{2}+\frac{1}{r})\theta\right)i+\varepsilon},
& \text{ if } \frac{5}{2}+i\left( \frac{1}{2}+\frac{1}{r}\right)\le \frac{\ell}{r}< 4+i\left( \frac{1}{2}+\frac{1}{r}\right),\nonumber\\
\\
\Delta X^{\frac{\ell}{r}-1-\left(\frac{\ell}{r}-4\right)\theta-\left(\frac{1}{r}-(\frac{1}{2}+\frac{1}{r})\theta\right)i+\varepsilon},
& \text{ if } \frac{\ell}{r}< \frac{5}{2}+i\left( \frac{1}{2}+\frac{1}{r}\right),
\end{cases}\nonumber\\
&\ll&
\begin{cases}
\Delta X^{\frac{\ell-i}{r}-1+\varepsilon},
& \text{ if } \frac{\ell}{r}\ge 4+i\left( \frac{1}{2}+\frac{1}{r}\right),\\
\\
\Delta X^{\frac{\ell}{r}-1-\left(\frac{\ell}{r}-4\right)\theta-\left(\frac{1}{r}-(\frac{1}{2}+\frac{1}{r})\theta\right)i+\varepsilon},
& \text{ if } \frac{\ell}{r}< 4+i\left( \frac{1}{2}+\frac{1}{r}\right),
\end{cases}
\eea
and the last term in \eqref{R-estimate} can be estimated as
\bea\label{eq:33}
&&X^{\frac{5+i}{2}+\varepsilon}\sum_{q\le P}q^{-\frac{\ell-i}{r}+\frac{i}{2}+3}\int_{\frac{1}{X}<|\beta |\le \frac{1}{qQ}} \;|\beta|^{\frac{5+i}{2}-\frac{\ell-i}{r}}\mathrm{d}\beta\nonumber\\
&\ll&
X^{\frac{5+i}{2}+\varepsilon}\sum_{q\le P}q^{-\frac{\ell-i}{r}+\frac{i}{2}+3}
\begin{cases}
\left(\frac{1}{X}\right)^{\frac{7+i}{2}-\frac{\ell-i}{r}},
& \text{ if } \frac{\ell}{r}\ge \frac{7}{2}+i\left( \frac{1}{2}+\frac{1}{r}\right), \\
\\
\left(\frac{1}{qQ}\right)^{\frac{7+i}{2}-\frac{\ell-i}{r}},
& \text{ if } \frac{\ell}{r}< \frac{7}{2}+i\left( \frac{1}{2}+\frac{1}{r}\right),
\end{cases}\nonumber\\
&\ll&
\begin{cases}
X^{\frac{\ell-i}{r}-1+\varepsilon},
& \text{ if } \frac{\ell}{r}\ge 4+i\left( \frac{1}{2}+\frac{1}{r}\right), \\
\\
X^{\frac{\ell}{r}-1-\left(\frac{\ell}{r}-4\right)\theta-\left(\frac{1}{r}-(\frac{1}{2}+\frac{1}{r})\theta\right)i+\varepsilon},
& \text{ if } \frac{7}{2}+i\left( \frac{1}{2}+\frac{1}{r}\right)\le \frac{\ell}{r}< 4+i\left( \frac{1}{2}+\frac{1}{r}\right),\nonumber\\
\\
X^{\frac{\ell}{r}-1-\left(\frac{\ell}{r}-4\right)\theta-\left(\frac{1}{r}-(\frac{1}{2}+\frac{1}{r})\theta\right)i+\varepsilon},
& \text{ if } \frac{\ell}{r}< \frac{7}{2}+i\left( \frac{1}{2}+\frac{1}{r}\right),
\end{cases}\\
&\ll&
\begin{cases}
X^{\frac{\ell-i}{r}-1+\varepsilon},
& \text{ if } \frac{\ell}{r}\ge 4+i\left( \frac{1}{2}+\frac{1}{r}\right), \\
\\
X^{\frac{\ell}{r}-1-\left(\frac{\ell}{r}-4\right)\theta-\left(\frac{1}{r}-(\frac{1}{2}+\frac{1}{r})\theta\right)i+\varepsilon},
& \text{ if } \frac{\ell}{r}< 4+i\left( \frac{1}{2}+\frac{1}{r}\right).
\end{cases}
\eea
By \eqref{R-estimate}-\eqref{eq:33},
$\mathbf{R}_i^{\pm}$ can be bounded as
\bea\label{R-estimate2}
\mathbf{R}_i^{\pm}\ll
\begin{cases}
\Delta X^{\frac{\ell}{r}-1-\left(\frac{\ell}{r}-4\right)\theta-\left(\frac{1}{r}-(\frac{1}{2}+\frac{1}{r})\theta\right)i+\varepsilon},
& \text{ if } \frac{\ell}{r}< 4+i\left( \frac{1}{2}+\frac{1}{r}\right), \\
\\
\Delta X^{\frac{\ell-i}{r}-1+\varepsilon},
& \text{ if } \frac{\ell}{r}\ge 4+i\left(\frac{1}{2}+\frac{1}{r}\right),
\end{cases}
\eea

Inserting \eqref{M-estimate2} and \eqref{R-estimate2} into \eqref{med-estimate}, we obtain
\bea\label{eq:Major}
&&\int_\mathfrak{M}\mathscr{F}_r^\ell(\alpha, X )\mathscr{F}_s(\alpha, X)\mathscr{G}(\alpha, X )\mathrm{d}\alpha\nonumber\\
&\ll&\Delta X^{\frac{\ell}{r}+\frac{1}{s}-1-\left(\frac{\ell}{r}+\frac{1}{s}-\frac{7}{2}\right)\theta+\varepsilon}+\Delta X^{\frac{\ell}{r}-1-\left(\frac{\ell}{r}-4\right)\theta+\varepsilon}+\Delta  X^{\frac{\ell}{r}+\frac{1}{s}-1+\varepsilon}.
\eea
This completes the treatment of the major arcs.

\section{Completion of the proof of Theorem \ref{Th11}}
\setcounter{equation}{0}

Recall that $r\geq2$, $s\geq 2$, $\ell\geq 2^{r-1}$ and by \eqref{theta}
$$\theta\le\theta_0=\min\big\{1/r, 1/s\big\}.$$

\medskip

\quad  (\romannumeral1) $2\le r\le 7$, $2\le s\le 7$.

By \eqref{minor arcs} and \eqref{eq:Major}, \eqref{deposition} is bounded by
\bna
\mathscr{S}(X)
&\ll& \Delta X^{\frac{\ell}{r}+\frac{1}{s}-1-\left(\frac{\ell}{r}+\frac{1}{s}-\frac{7}{2}\right)\theta+\varepsilon}+\Delta X^{\frac{\ell}{r}-1-\left(\frac{\ell}{r}-4\right)\theta+\varepsilon}\\
&&+\Delta  X^{\frac{\ell}{r}+\frac{1}{s}-1+\varepsilon}+X^{\frac{\ell}{r}+\frac{1}{s}-\left(\frac{\ell}{2^{r-1}}+\frac{1}{2^{s-1}}-1\right)\theta+\varepsilon}.
\ena

For $\frac{\ell}{r}+\frac{1}{s}\geq \frac{7}{2}$, we take $\theta=\theta_0$ and then
\bea\label{1}
\mathscr{S}(X)\ll \Delta  X^{\frac{\ell}{r}+\frac{1}{s}-1+\varepsilon}+X^{\frac{\ell}{r}+\frac{1}{s} -\left(\frac{\ell}{2^{r-1}}+\frac{1}{2^{s-1}}-1\right)\theta_0+\varepsilon}.
\eea

For $\frac{\ell}{r}+\frac{1}{s}< \frac{7}{2}$, we first note that, for $s\geq 2$ and $\theta\leq 1/s$,
\bea\label{bj}
\frac{\ell}{r}+\frac{1}{s}-1-\left(\frac{\ell}{r}+\frac{1}{s}-\frac{7}{2}\right)\theta
\geq \frac{\ell}{r}-1-\left(\frac{\ell}{r}-4\right)\theta.
\eea
Thus by taking
$\theta=\theta_0$, we obtain
\bea\label{2}
\mathscr{S}(X)\ll \Delta X^{\frac{\ell}{r}+\frac{1}{s}-\left(1-\left(\frac{7}{2}-\frac{\ell}{r}-\frac{1}{s}\right)\theta_0\right)+\varepsilon}
+X^{\frac{\ell}{r}+\frac{1}{s}-\left(\frac{\ell}{2^{r-1}}+\frac{1}{2^{s-1}}-1\right)\theta_0+\varepsilon}.
\eea

\medskip

\quad  (\romannumeral2) $2\le r\le 7$, $s\ge 8$.

By \eqref{minor arcs2} and \eqref{eq:Major}, \eqref{deposition} is bounded by
\bna
\mathscr{S}(X)
&\ll& \Delta X^{\frac{\ell}{r}+\frac{1}{s}-1-\left(\frac{\ell}{r}+\frac{1}{s}-\frac{7}{2}\right)\theta+\varepsilon}+\Delta X^{\frac{\ell}{r}-1-\left(\frac{\ell}{r}-4\right)\theta+\varepsilon}\\
&&+\Delta X^{\frac{\ell}{r}+\frac{1}{s}-1+\varepsilon}+X^{\frac{\ell}{r}+\frac{1}{s}-\left(\frac{\ell}{2^{r-1}}+\frac{1}{2s(s-1)}-1\right)\theta+\varepsilon}.
\ena

For $\frac{\ell}{r}+\frac{1}{s}\geq \frac{7}{2}$, we take $\theta=\theta_0$ and then
\bea\label{3}
\mathscr{S}(X)\ll \Delta  X^{\frac{\ell}{r}+\frac{1}{s}-1+\varepsilon}+X^{\frac{\ell}{r}+\frac{1}{s}
-\left(\frac{\ell}{2^{r-1}}+\frac{1}{2s(s-1)}-1\right)\theta_0+\varepsilon}.
\eea

For $\frac{\ell}{r}+\frac{1}{s}< \frac{7}{2}$, notice the inequality in \eqref{bj} and by taking
$\theta=\theta_0$, we have
\bea\label{4}
\mathscr{S}(X)\ll \Delta X^{\frac{\ell}{r}+\frac{1}{s}-\left(1-\left(\frac{7}{2}-\frac{\ell}{r}-\frac{1}{s}\right)\theta_0\right) +\varepsilon}+X^{\frac{\ell}{r}+\frac{1}{s}-\left(\frac{\ell}{2^{r-1}}+\frac{1}{2s(s-1)}-1\right)\theta_0+\varepsilon}.
\eea

\medskip

\quad  (\romannumeral3) $r\ge8$, $2\le s\le 7$.

By \eqref{romannumeral3} and \eqref{eq:Major}, \eqref{deposition} is bounded by
\bna
\mathscr{S}(X)
&\ll& \Delta X^{\frac{\ell}{r}+\frac{1}{s}-1-\left(\frac{\ell}{r}+\frac{1}{s}-\frac{7}{2}\right)\theta+\varepsilon}+\Delta X^{\frac{\ell}{r}-1-\left(\frac{\ell}{r}-4\right)\theta+\varepsilon}\\
&&+\Delta  X^{\frac{\ell}{r}+\frac{1}{s}-1+\varepsilon}+X^{\frac{\ell}{r}+\frac{1}{s}-\left(\frac{\ell-2^{r-1}}{2r(r-1)}+\frac{1}{2^{s-1}}\right)\theta+\varepsilon}.
\ena

For $\frac{\ell}{r}+\frac{1}{s}\geq \frac{7}{2}$, we take $\theta=\theta_0$ and then
\bea\label{5}
\mathscr{S}(X)\ll \Delta  X^{\frac{\ell}{r}+\frac{1}{s}-1+\varepsilon}+X^{\frac{\ell}{r}+\frac{1}{s}-\left(\frac{\ell-2^{r-1}}{2r(r-1)}+\frac{1}{2^{s-1}}\right)\theta_0+\varepsilon}.
\eea

For $\frac{\ell}{r}+\frac{1}{s}< \frac{7}{2}$, using \eqref{bj} and taking
$\theta=\theta_0$, we have
\bea\label{6}
\mathscr{S}(X)\ll\Delta X^{\frac{\ell}{r}+\frac{1}{s}-\left(1-\left(\frac{7}{2}-\frac{\ell}{r}-\frac{1}{s}\right)\theta_0\right)+\varepsilon}
+X^{\frac{\ell}{r}+\frac{1}{s}-\left(\frac{\ell-2^{r-1}}{2r(r-1)}+\frac{1}{2^{s-1}}\right)\theta_0+\varepsilon}.
\eea

\medskip

\quad  (\romannumeral4) $r\ge8$, $s\ge 8$.

By \eqref{romannumeral4} and \eqref{eq:Major}, \eqref{deposition} is bounded by
\bna
\mathscr{S}(X)&\ll& \Delta X^{\frac{\ell}{r}+\frac{1}{s}-1-\left(\frac{\ell}{r}+\frac{1}{s}-\frac{7}{2}\right)\theta+\varepsilon}+\Delta X^{\frac{\ell}{r}-1-\left(\frac{\ell}{r}-4\right)\theta+\varepsilon}\\
&&+\Delta  X^{\frac{\ell}{r}+\frac{1}{s}-1+\varepsilon}+X^{\frac{\ell}{r}+\frac{1}{s}-\left(\frac{\ell-2^{r-1}}{2r(r-1)}+\frac{1}{2s(s-1)}\right)\theta+\varepsilon}.
\ena

For $\frac{\ell}{r}+\frac{1}{s}\geq \frac{7}{2}$, we take $\theta=\theta_0$ and then
\bea\label{7}
\mathscr{S}(X)\ll \Delta  X^{\frac{\ell}{r}+\frac{1}{s}-1+\varepsilon}+X^{\frac{\ell}{r}+\frac{1}{s}
-\left(\frac{\ell-2^{r-1}}{2r(r-1)}+\frac{1}{2s(s-1)}\right)\theta_0+\varepsilon}.
\eea
For $\frac{\ell}{r}+\frac{1}{s}< \frac{7}{2}$, using \eqref{bj} and taking $\theta=\theta_0$, we have
\bea\label{8}
\mathscr{S}(X)
\ll \Delta X^{\frac{\ell}{r}+\frac{1}{s}-\left(1-
\left(\frac{7}{2}-\frac{\ell}{r}-\frac{1}{s}\right)\theta_0\right)+\varepsilon}+X^{\frac{\ell}{r}+\frac{1}{s} -\left(\frac{\ell-2^{r-1}}{2r(r-1)}+\frac{1}{2s(s-1)}\right)\theta_0+\varepsilon}.
\eea
By \eqref{1} and \eqref{2}--\eqref{8}, the proof of Theorem \ref{Th11} is complete.

\bigskip
\noindent{\bf Acknowledgements}
The authors are very grateful to the referees for their valuable suggestions.

\bigskip

{\small \textsc{Qingfeng Sun},
	\textsc{School of Mathematics and Statistics, Shandong University, Weihai,
		Weihai, Shandong 264209, China}\\
	\indent{\it E-mail address}: qfsun@sdu.edu.cn}

\medskip

{\small \textsc{Yanxue Yu},
\textsc{School of Mathematics and Statistics, Shandong University, Weihai,
		Weihai, Shandong 264209, China}\\
    \indent{\it E-mail address}: yanxueyu@mail.sdu.edu.cn}
	
\end{document}